\newtheorem{theorem}{Theorem}
\newtheorem*{acknowledgement}[theorem]{Acknowledgement}
\newtheorem{example}[theorem]{Example}
\newtheorem{lemma}[theorem]{Lemma}
\newtheorem{proposition}[theorem]{Proposition}
\newenvironment{proof}[1][Proof]{\textbf{#1.} }{\ \rule{0.5em}{0.5em}}
\tikzset{
    scale plot marks/.is choice,
    scale plot marks/false/.code={
        \def\pgfuseplotmark##1{\pgftransformresetnontranslations\csname pgf@plot@mark@##1\endcsname}
    },
    scale plot marks/true/.style={},
    scale plot marks/.default=true
}
\newcommand{\A}{A^N}
\newcommand{\U}{U_{(i)}^N}
\newcommand{\Corder}{{C}_{(i)}^N}
\newcommand{\LoadHat}{l^N(i)}
\newcommand{\Umn}{U_{(M)}^N}
\newcommand{\Uin}{U_{(i)}^N}
\newcommand{\Uimeen}{U_{(i)}^{M-1}}
\newcommand{\Uinm}{U_{(i)}^{N-M}}
\newcommand{\Prob}{\mathbb{P}}
\newcommand{\E}{\mathbb{E}}
\title{Robustness of Power-law Behavior\\
in Cascading Failure Models}
\author{F.~Sloothaak\footnote{F.~Sloothaak is with Eindhoven University of Technology, P.O.~Box 513, 5600 MB Eindhoven, The Netherlands.} \and S.C.~Borst\footnote{S.C.~Borst is with Nokia Bell Labs, P.O.~Box 636, Murray Hill, NJ 07974-0636, USA, and Eindhoven University of Technology, P.O.~Box 513, 5600 MB Eindhoven, The Netherlands.} \and A.P.~Zwart\footnote{A.P.~Zwart is with Centrum Wiskunde \& Informatica (CWI), P.O.~Box~94079, 1090 GB Amsterdam, The Netherlands, and Eindhoven University of Technology, P.O.~Box 513, 5600 MB Eindhoven, The Netherlands.}}
\begin{document}
\maketitle


\begin{abstract}
Inspired by reliability issues in electric transmission networks, we use a probabilistic approach to study the occurrence of large failures in a stylized cascading failure model. In this model, lines have random capacities that initially meet the load demands imposed on the network. Every single line failure changes the load distribution in the surviving network, possibly causing further lines to become overloaded and trip as well. An initial single line failure can therefore potentially trigger massive cascading effects, and in this paper we measure the risk of such cascading events by the probability that the number of failed lines exceeds a certain large threshold. Under particular critical conditions, the exceedance probability follows a power-law distribution, implying a significant risk of severe failures. We examine the robustness of the power-law behavior by exploring under which assumptions this behavior prevails.
\end{abstract}

\section{Introduction}
\label{sec:Introduction}
Cascading failure models are used to describe networks where load demands are imposed on the lines, and lines fail when their capacities cannot meet the demand. Each line failure induces changes in the load distribution in the surviving network, possibly causing further lines to trip in succession and triggering knock-on effects. Despite the admittedly stylized nature, these models capture essential features of failure processes in many settings. The abstract nature allows for a wide range of applications, such as material science, traffic networks and earthquake dynamics~\cite{Pradhan2010}. 

Our inspiration is specifically drawn from energy networks. Large blackouts of electric power transmission systems have catastrophic consequences in modern-day society. Examples include the Northeast Blackout of 2003, the India Blackout of 2012 and the Turkey Blackout of 2015. The analysis of severe blackouts has therefore become a crucial part of transmission grid planning and operations~\cite{Wang2015}. Cascading failure is a key mechanism, and typically the cascading phenomenon involves long and quite complex sequences of line failures, making the evaluation of the failure propagation difficult. 

Historically, empirical data analyses of large blackouts in North America show that the blackout size is heavy-tailed and has a power-law dependence~\cite{Dobson2004}. Work in~\cite{Carreras2002,Chen2005,Nedic2006} suggests that there is a critical loading regime where the blackout size follows a power-law distribution. This means that the probability that the blackout size is of size $k$ decreases only proportional to $k^{-\alpha}$ for a constant $\alpha>0$, which is much smaller than the exponential rate of decay for light-tailed distributions. This heavy-tailed property reflects a significant risk of seeing large blackouts occurring.

A possible method for analyzing cascading failure models involves a rare-event simulation methodology, such as importance sampling and splitting~\cite{Shortle2013,Wang2015}. A significant advantage of this technique is that it allows an analysis of fairly complex cascading failure models. However, this methodology does not provide structural insights in the mechanism leading to power-law behavior. Alternatively, Dobson et al.~\cite{Dobson2004,Dobson2005} present an analytically tractable model that shows power-law behavior for the blackout size under critical conditions. 

Our framework concerns a stochastic load-dependent cascading failure model, which can be seen as a generalization of the model of Dobson et al.~\cite{Dobson2005}. Specifically, we consider a network where the load is carried on $N$ statistically identical lines. We assume that the system is initially stable, and lines have stochastic surplus capacities. An initiating disturbance causes an increased load at all lines, which may lead to a cascading failure process when the load surge exceeds the surplus capacity of one or more lines. In order to understand and quantify the risks of cascading failures, we study the probability that the number of failed lines exceeds some threshold $k$.

The model presented by Dobson et al.~\cite{Dobson2005} is a special case of our framework. This model assumes uniformly distributed surplus capacities and a load surge function that is linearly increasing. It turns out that the blackout size follows a quasi-binomial distribution in this case, which converges to a generalized Poisson distribution when $N \rightarrow \infty$. This is the same distribution as the number of offspring in a branching process. In~\cite{Dobson2004}, Dobson et al.~use this branching process as an approximation for the probability that the blackout size is exactly $k$. For a particular critical regime, it is known~\cite{Otter1949} that this yields power-law behavior with coefficient $3/2$ when $k \rightarrow \infty$. 

In the present paper we study the robustness of the power-law behavior for the model of~\cite{Dobson2005}, and extend these results in two directions. First, we investigate whether the power-law behavior prevails when the threshold $k$ depends on the network size $N$ under similar assumptions on the surplus capacities and the load surge function as in~\cite{Dobson2005}. This extension provides a rigorous justification for approximations such as the probability that the network partially breaks down, e.g.~the probability that at least $20 \%$ of all lines fail. We show that this extension still results in power-law behavior, and observe that in case $k= \alpha N$ for some $\alpha \in (0,1)$, the corresponding prefactor differs from the branching process approximation in~\cite{Dobson2004}. Our result indicates that the dependency between $k$ and $N$ must be accounted for in order to derive an asymptotically exact result.

Second, we investigate under which broader assumptions on the surplus capacities and the load surge function in~\cite{Dobson2005} the power-law distribution of the blackout size is preserved. It turns out that this can be captured by identifying the possible functions for the composition of the surplus capacity distribution function and the load surge function. When this composition is a linearly increasing function with critical slope, we obtain the special case of Dobson et al~\cite{Dobson2005}. We show that the power-law behavior also prevails for settings where the linearly increasing load surge function is perturbed under specific conditions. Whether these conditions are satisfied depends on three factors, namely the surplus capacity distribution, the load surge function and the threshold $k$. We conclude by considering examples with given surplus capacity distribution and load surge function, and identify the possible thresholds $k$ that yield power-law behavior.

The rest of this paper is organized as follows. In Section~\ref{sec:ModelDescription} we describe the cascading failure model. We explain our main results in detail in Section~\ref{sec:MainResults}, and present the proofs in Sections~\ref{sec:ProofsOfB} and \ref{sec:ProofsOfC}. In Section~\ref{sec:IdentifyCriticalRegion} we consider illustrative examples that identify thresholds $k$ where the power-law behavior prevails. We present a few concluding remarks and discuss possible directions for further research in Section~\ref{sec:Outlook}.

\section{Model description}
\label{sec:ModelDescription}
We consider a network consisting of $N$ statistically identical lines. Each line has a limited capacity for the amount of load it can carry before it trips. We assume that the network is initially stable, i.e.~all lines have capacities that exceed their initial load. The difference between the initial load and capacity of line $i$ is denoted by $C^N(i)$, and referred to as the \textit{surplus capacity}. We assume the surplus capacities of the various lines to be i.i.d.~with common distribution function $F(\cdot)$. 

In order to trigger a possible cascading failure effect, we include a failing dummy line in the network. The failing dummy line causes an initial increase in load for the surviving lines, which can then possibly fail as well. We denote by $l^N(i)$ the \textit{load surge} on each surviving line when the dummy line plus $i-1$ other lines have failed, and assume this is a deterministic function. Since line failures cause additional load on the remaining lines, $l^N(\cdot)$ is thus an increasing function. 

The main objective in this paper involves the probability that $\A$, the number of failed lines in the network, exceeds a certain threshold $k$ as $N$ grows large. In order to express this in mathematical terms, we take a closer look at the cascading failure process. After the dummy line has tripped, a next line will fail when the smallest surplus capacity is exceeded by the load surge $l^N(1)$. If so, another line will fail if and only if $l^N(2)$ exceeds the second smallest surplus capacity and so forth. Denote by $C_{(1)}^N \leq {C}_{(2)}^N\leq ... \leq {C}_{(N)}^N$ the ordered surplus capacities. The above observation yields that the blackout size is given by
\begin{align}
A_N = \max \{ k \in \mathbb{N} : {C}^N_{(i)} \leq {l}^N(i), \;\;\; i=1,...,k \}
\label{eq:1A}
\end{align}
and $A_N=0$ if ${C}^N_{(1)} > {l}^N(1)$. The probability that the blackout size exceeds an integer $k$ is thus given by
\begin{equation}
\Prob(\A \geq k) = \Prob\left( C^N(i) \leq l^N(i), \;\;\; i=1,...,k\right).
\label{eq:failedLinesProb}
\end{equation}

In fact, \eqref{eq:failedLinesProb}~can be rewritten into an expression that is easier to analyze. Let $\U$ denote the standard uniformly distributed ordered statistics for $i=1,...,N$ and suppose $F(\cdot)$ is continuous. Following Lemma~4.1.9 in~\cite[p.188]{Embrechts1997}, we find that $(F(\Corder))_{i=1,...,N}$ and $(\U))_{i=1,...,N}$ are equal in distribution. Therefore,~\eqref{eq:failedLinesProb} is equivalent to
\begin{equation}
\Prob(\A \geq k) = \Prob\left( \U \leq F(\LoadHat), \;\;\; i=1,...,k\right).
\label{eq:OrderedProb}
\end{equation}

\noindent
Note that $F(l^N(i))$ represents the probability that a line does not have sufficient surplus capacity to sustain the load surge after $i-1$ other lines have failed. We observe that $F \circ l^N$ is an increasing function in the number of failed lines with support $[0,1]$. In addition, we assume that the capacity surplus distribution has a strictly positive density in zero.

We note that our model does not explicitly account for many complexities that exist in real electric power transmission systems, such as the length of time between occurrences or the network topology that can lead to multiple line capacity distributions or non-equal load distribution. Yet, this model captures two important features of large blackouts: the initial disturbance loading the system and the cascading line failure mechanism.

Our framework can be seen as an extension of the model presented by Dobson et al.~\cite{Dobson2005}. Their model comprises uniformly distributed initial loads, where lines fail when a fixed load limit (larger than all possible initial loads) is exceeded. Due to the properties of the uniform distribution, we observe that the surplus capacity is in that case also uniformly distributed. To start the cascading process, there is an initial fixed load increase for all lines and each failing line induces a fixed increase of load on the surviving lines. After normalizing, we see that that this model is a special case of our framework with standard uniformly distributed surplus capacities and a load surge function of the form
\begin{align}
{l}^N_D(i) = \frac{\theta +(i-1)\lambda}{N}
\label{eq:LoadSurgeAffine}
\end{align}
for positive constants $\theta$ and $\lambda$. In other words, the failing dummy line induces a load surge of $\theta/N$ at each line, and every next failing line induces a fixed load surge of $\lambda/N$ at each surviving line. In view of~\eqref{eq:LoadSurgeAffine}, we refer to this particular setting as the affine case. Dobson et al.~indicate that when $N \rightarrow \infty$, the blackout size distribution converges to a generalized Poisson distribution, which is also the distribution of the number of offspring in a particular branching process. In~\cite{Dobson2004}, Dobson et al.~use the branching process as an approximation for the blackout size and note that $\lambda=1$ corresponds to a critical window where a power-law dependence manifests itself. For this critical window, the branching process approximation is given by
\begin{align}
\Prob(A^N=k) \approx \frac{\theta}{\sqrt{2\pi}}k^{-3/2}.
\label{eq:DobsonBranchingApproximation}
\end{align}

\section{Main results}
\label{sec:MainResults}
The main object of interest in this paper is the probability that the number of failed lines $A^N$ exceeds some threshold $k:=k(N)$ with both $k \rightarrow \infty$ and $N-k \rightarrow \infty$ as $N \rightarrow \infty$. For compactness, we suppress the dependence on $N$ in the remainder of the paper. In this section we provide an overview of the main results and implications for the robustness of the power-law behavior.

\subsection{The affine case}
We first examine the robustness of the power-law behavior of the affine model, which has uniformly distributed surplus capacities and a load surge function of the form~\eqref{eq:LoadSurgeAffine}, for thresholds $k$ growing with $N$. As a matter of fact, we note that Equation~\eqref{eq:OrderedProb} implies that the affine case covers all cases for which
\begin{align}
F({l}^N(i)) =  \frac{\theta+i-1}{N}
\label{eq:relationLoadCap}
\end{align}
holds for some constant $\theta >0$. That is, the composition $F \circ l^N$ needs to be linearly increasing with step increments $1/N$. 

This extension thus involves the same assumption as the model of Dobson et al.~\cite{Dobson2005}, but accounts for the dependence between $k$ and $N$. For such thresholds, we will obtain the approximation
\begin{align}
\Prob(A^N=k) \approx \frac{\theta}{\sqrt{2\pi}} \frac{1}{\sqrt{1-k/N}} k^{-3/2}.
\label{eq:AffineOurApproximation}
\end{align}
We observe that if $k$ is proportional to the network size, i.e.~$k=\alpha N$ for some fixed coefficient $\alpha \in (0,1)$, our approximation leads to a different prefactor than the branching process approximation~\eqref{eq:DobsonBranchingApproximation}. This difference emerges because the derivation of~\eqref{eq:DobsonBranchingApproximation} essentially uses a double limit approach. That is, it first lets $N \rightarrow \infty$ for fixed $k$, and then lets $k \rightarrow \infty$. Specifically, \eqref{eq:DobsonBranchingApproximation}~originates from the result
\begin{align}
\lim_{k \rightarrow \infty} k^{3/2} \lim_{N \rightarrow \infty} \Prob(\A=k) = \frac{\theta}{\sqrt{2\pi}},
\label{eq:DobsonApproximationFormal}
\end{align}
see Section~\ref{sec:ProofsOfB} for a proof. Our result, stated in Theorem~\ref{thm:ToBeProvedProb}, shows that for large network size dependent thresholds $k$, the power-law behavior indeed prevails, but the result needs to account for this dependence in order to be asymptotically exact.

\begin{theorem}
Let $F\circ l^N$ be as in~\eqref{eq:relationLoadCap} with constant $\theta>0$ for each $N \in \mathbb{N}$. Let $k_\star:=k_\star(N)$ and $k^\star:=k^\star(N)$ be functions of $N$ with $k^\star \geq k_\star$, $k_\star \rightarrow \infty$ and $N-k^\star \rightarrow \infty$ as $N \rightarrow \infty$. Then,
\begin{equation}
\lim_{N \rightarrow \infty} \sup_{k \in [k_\star,k^\star]} \left|k^{3/2}\sqrt{1-k/N} \Prob(\A=k)-\frac{\theta}{\sqrt{2 \pi}} \right| =0.
\label{eq:ToBeProvedEquation}
\end{equation}
\label{thm:ToBeProvedProb}
\end{theorem}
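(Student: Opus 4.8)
The plan is to reduce the problem to an exact closed-form expression for $\Prob(\A=k)$ and then read off the limit through a Stirling expansion in which, crucially, all powers of $N$ cancel \emph{identically}.

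\textbf{Step 1 (exact mass function).} Combining~\eqref{eq:OrderedProb} with~\eqref{eq:relationLoadCap}, the event $\{\A=k\}$ is the event that $\U\le(\theta+i-1)/N$ for $i=1,\dots,k$ while $U_{(k+1)}^N>(\theta+k)/N$. One readily checks that this forces exactly $k$ of the $N$ i.i.d.\ uniforms to lie below $p:=(\theta+k)/N$ and the remaining $N-k$ to lie above $p$ (no point can fall in $((\theta+k-1)/N,p]$). Conditioning on which $k$ variables are the small ones and using independence gives
\[
\Prob(\A=k)=\binom{N}{k}\Bigl(1-\tfrac{\theta+k}{N}\Bigr)^{N-k}R_k,\qquad R_k:=\Prob\Bigl(V_{(i)}\le\tfrac{\theta+i-1}{N},\ i=1,\dots,k\Bigr),
\]
for $V_1,\dots,V_k$ i.i.d.\ uniform on $[0,1]$. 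Writing $R_k$ as $k!$ times the volume of the simplex $\{0\le x_1\le\dots\le x_k,\ x_i\le(\theta+i-1)/N\}$ and invoking Abel's binomial identity (equivalently, a ballot/cycle-lemma argument) yields $R_k=\theta(\theta+k)^{k-1}/N^{k}$, so that
\[
\Prob(\A=k)=\binom{N}{k}\frac{\theta}{N}\Bigl(\frac{\theta+k}{N}\Bigr)^{k-1}\Bigl(1-\frac{\theta+k}{N}\Bigr)^{N-k},
\]
the quasi-binomial law already noted for the affine model. This is well defined as soon as $\theta+k\le N$, which holds for every $k\le k^\star$ once $N$ is large, since $N-k^\star\to\infty$.

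\textbf{Step 2 (Stirling with exact cancellation).} Substituting Stirling's formula $n!=\sqrt{2\pi n}\,n^{n}e^{-n}e^{\delta_n}$ with $0<\delta_n<1/(12n)$ into $\binom{N}{k}$, all powers of $N$ cancel, and using the elementary identity $k^{3/2}\sqrt{1-k/N}\cdot\sqrt{N/(k(N-k))}\cdot k^{-1}=1$ one obtains the \emph{exact} relation
\[
k^{3/2}\sqrt{1-k/N}\,\Prob(\A=k)=\frac{\theta}{\sqrt{2\pi}}\Bigl(1+\frac{\theta}{k}\Bigr)^{k-1}\Bigl(1-\frac{\theta}{N-k}\Bigr)^{N-k}e^{\delta_N-\delta_k-\delta_{N-k}}.
\]
No approximation has yet been made; the entire content of the theorem now lies in the three factors on the right-hand side.

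\textbf{Step 3 (uniform limits).} It remains to check that each factor converges to its limit uniformly over $k\in[k_\star,k^\star]$. Since $k\ge k_\star\to\infty$, the factor $(1+\theta/k)^{k-1}\to e^{\theta}$ uniformly — it depends on $k$ alone, so this is merely the tail of a convergent sequence. Since $N-k\ge N-k^\star\to\infty$, the factor $(1-\theta/(N-k))^{N-k}\to e^{-\theta}$ uniformly. And $|\delta_N|+|\delta_k|+|\delta_{N-k}|\le\frac{1}{12}\bigl(\frac1N+\frac1{k_\star}+\frac1{N-k^\star}\bigr)\to0$, so the exponential factor tends to $1$ uniformly. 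As $e^{\theta}e^{-\theta}=1$, the right-hand side converges to $\theta/\sqrt{2\pi}$ uniformly in $k$, which is exactly~\eqref{eq:ToBeProvedEquation}.

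I expect the only genuinely non-routine ingredient to be the exact evaluation $R_k=\theta(\theta+k)^{k-1}/N^{k}$ in Step~1 (the Abel identity, or an equivalent combinatorial/ballot argument); everything afterwards is essentially forced. In particular, the uniformity requires no delicate estimates precisely because the $N$-powers cancel identically in Step~2 rather than merely asymptotically — the only things to keep an eye on are the mild boundary requirements $\theta+k\le N$ and $k+1\le N$ (needed for $l^N(k+1)$ and for the formula to be meaningful), which are harmless under $N-k^\star\to\infty$.
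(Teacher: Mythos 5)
Your proposal is correct and follows essentially the same route as the paper: reduce to the quasi-binomial mass function $\binom{N}{k}\frac{\theta}{N}\bigl(\frac{\theta+k}{N}\bigr)^{k-1}\bigl(1-\frac{\theta+k}{N}\bigr)^{N-k}$, apply Stirling so that the powers of $N$ cancel exactly, and deduce uniformity over $[k_\star,k^\star]$ from $k_\star\to\infty$ and $N-k^\star\to\infty$ (the paper cites the pmf from Dobson et al.\ and uses monotonicity of $(1+\theta/x)^{x-1}$ and $(1-\theta/x)^{x}$ to bound the supremum, whereas you keep an exact identity with Stirling error terms and argue via tails of convergent sequences — a cosmetic difference). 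Your Step 1 sketch of the pmf, including $R_k=\theta(\theta+k)^{k-1}/N^{k}$ via the Abel/ballot identity, is also correct.
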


Dobson et al.~show in~\cite{Dobson2005} that the blackout size for the affine model follows a quasi-binomial distribution, and the proof of Theorem~\ref{thm:ToBeProvedProb} relies heavily on the explicit form of that distribution function. We note that the same technique can be used for fixed $k$, which yields the generalized Poisson distribution, or for $k=N-l$ with $l>\theta$ a fixed integer. This gives rise to the results summarized in Table~\ref{tab:Overview}.

\begin{table}[htb]
\centering
\begin{tabular}{ll}
\toprule
\multicolumn{2}{c}{$\Prob(A^N=k)$}  \\
\midrule
$k$ fixed & $\frac{\theta \left(\theta+k\right)^{k-1}}{k!} e^{-(\theta+k)}$ \\
$k$ growing & $\frac{\theta}{\sqrt{2 \pi}} \frac{k^{-3/2}}{\sqrt{1-k/N}} $ \\
$k=N-l$, $l$ fixed & $\frac{\theta(l-\theta)^l}{l!} e^{-(l-\theta)}N^{-1}$\\
\bottomrule
\\
\end{tabular}
\caption{Asymptotic behavior affine case}
\label{tab:Overview}
\end{table}

We can use Theorem~\ref{thm:ToBeProvedProb} to derive the asymptotic probability that the blackout size exceeds the threshold~$k$:
\begin{align}
\Prob(\A \geq k) \approx \frac{2\theta}{\sqrt{2 \pi}} \sqrt{1-k/N} k^{-1/2}.
\label{eq:AffineExceedanceOurApproximation}
\end{align}
For example, if $k=\alpha N$ with $\alpha \in (0,1)$, we observe the power-law behavior $\Prob(\A \geq k) \approx 2\theta/\sqrt{2 \pi} \sqrt{1-\alpha} k^{-1/2}$. This approximation is supported by the following result.
\begin{theorem}
Let $F\circ l^N$ be of the form~\eqref{eq:relationLoadCap} with constant $\theta>0$ for each $N \in \mathbb{N}$, and $k:=k(N) \leq N$ a positive function of $N$ such that $k \rightarrow \infty$ and $N-k \rightarrow \infty$ as $N \rightarrow \infty$. Then,
\begin{align}
\lim_{N \rightarrow \infty} \sqrt{\frac{k N}{N-k}} \Prob(\A \geq k) = \frac{2\theta}{\sqrt{2 \pi}}.
\label{eq:CumulativeProbGeneral}
\end{align}
\label{thm:CumulativeProbGeneral}
\end{theorem}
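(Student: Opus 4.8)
The plan is to recover the exceedance probability by summing the pointwise estimate of Theorem~\ref{thm:ToBeProvedProb}. Write $\Prob(\A \ge k) = \sum_{j=k}^{N} \Prob(\A = j)$ and fix a truncation level $m := m(N)$ with $m \to \infty$ but $m = o(\sqrt{N-k})$ as $N\to\infty$; for instance $m = \lceil (N-k)^{1/4} \rceil$ is admissible, since $N-k\to\infty$. Split the sum into a bulk part $S_1 := \sum_{j=k}^{N-m} \Prob(\A = j)$ and a tail part $S_2 := \sum_{j=N-m+1}^{N} \Prob(\A = j) = \Prob(\A > N-m)$. The theorem then follows from $\sqrt{Nk/(N-k)}\,S_1 \to 2\theta/\sqrt{2\pi}$ together with $\sqrt{Nk/(N-k)}\,S_2 \to 0$.

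For the bulk part I apply Theorem~\ref{thm:ToBeProvedProb} with $k_\star = k$ and $k^\star = N-m$; its hypotheses hold because $N - k^\star = m \to \infty$ and $k^\star - k_\star = (N-k) - m \to \infty$. This yields, uniformly in $j \in [k,N-m]$, the estimate $\Prob(\A = j) = (1+o(1))\, g_N(j)$ with $g_N(x) := \tfrac{\theta}{\sqrt{2\pi}}\, x^{-3/2}(1-x/N)^{-1/2}$. Next I compare $\sum_{j=k}^{N-m} g_N(j)$ with $\int_k^{N-m} g_N(x)\,dx$. The key elementary fact is that $-2\sqrt{(N-x)/(Nx)}$ is an antiderivative of $x^{-3/2}(1-x/N)^{-1/2}$, so that
\[
\int_k^{N-m} g_N(x)\,dx \;=\; \frac{2\theta}{\sqrt{2\pi}}\left(\sqrt{\frac{N-k}{Nk}} - \sqrt{\frac{m}{N(N-m)}}\right) \;=\; \frac{2\theta}{\sqrt{2\pi}}\sqrt{\frac{N-k}{Nk}}\,\bigl(1+o(1)\bigr),
\]
the correction being of smaller order after scaling by $\sqrt{Nk/(N-k)}$ since $m/(N-k)\to 0$ and $m = o(N)$. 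Moreover $g_N$ is unimodal on $(0,N)$, because $\tfrac{d}{dx}\ln g_N(x) = -\tfrac{3}{2x} + \tfrac{1}{2(N-x)}$ vanishes only at $x = 3N/4$; hence the difference between the sum and the integral is bounded by a fixed multiple of $g_N(k) + g_N(N-m) + g_N(3N/4)$, and one checks directly that each of these three terms is $o\bigl(\sqrt{(N-k)/(Nk)}\bigr)$ under the standing assumptions $k\to\infty$, $N-k\to\infty$. Combining these pieces gives $\sqrt{Nk/(N-k)}\,S_1 \to 2\theta/\sqrt{2\pi}$.

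It remains to bound the tail $S_2 = \Prob(\A > N-m)$. Here Theorem~\ref{thm:ToBeProvedProb} is of no use, as it excludes the range where $N - j$ stays bounded; instead I would invoke the explicit quasi-binomial form of the blackout-size distribution established in~\cite{Dobson2005} — the representation underlying the third line of Table~\ref{tab:Overview} — to obtain, via Stirling's formula, the crude uniform bound $\Prob(\A = N-l) = O(1/N)$ valid for all $0 \le l \le m$. Then $S_2 = O(m/N)$ and
\[
\sqrt{\frac{Nk}{N-k}}\;S_2 \;=\; O\!\left( m\sqrt{\frac{k}{N(N-k)}}\,\right) \;\le\; O\!\left(\frac{m}{\sqrt{N-k}}\right) \;\longrightarrow\; 0
\]
by the choice $m = \lceil(N-k)^{1/4}\rceil$. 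Adding the two contributions gives $\sqrt{Nk/(N-k)}\,\Prob(\A\ge k) = \sqrt{Nk/(N-k)}\,(S_1 + S_2) \to 2\theta/\sqrt{2\pi}$, which is exactly~\eqref{eq:CumulativeProbGeneral}.

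I expect the main obstacle to be precisely the uniform control of the near-$N$ tail $S_2$: one cannot appeal to Theorem~\ref{thm:ToBeProvedProb} there and must return to the closed-form quasi-binomial probabilities, and the truncation level has to be tuned so that both the integral-boundary error (requiring $m = o(N-k)$) and the tail estimate (requiring $m = o(\sqrt{N-k})$) become negligible — the choice $m \asymp (N-k)^{1/4}$ achieves both simultaneously. The remaining ingredients, namely the antiderivative computation and the sum-versus-integral comparison via the unimodality of $g_N$, are routine.
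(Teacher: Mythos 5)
Your proposal is correct and follows essentially the same route as the paper's proof: a uniform pointwise estimate from Theorem~\ref{thm:ToBeProvedProb} on a bulk range $[k,N-m]$ with $m\to\infty$ slowly, a sum-versus-integral comparison using the exact antiderivative $-2\sqrt{(N-x)/(Nx)}$ (the paper evaluates the same integral via the substitution $x=N\sin^2 u$ and truncates at $k^\star=N-\log(N-k)$ rather than $N-(N-k)^{1/4}$), and an $O(1/N)$ bound on the near-$N$ tail extracted from the quasi-binomial formula via Stirling. The only cosmetic differences are the choice of truncation level and that the paper isolates the $j=k$ term separately, whereas you absorb it into the sum--integral error via $g_N(k)$; both work.
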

We conclude from Theorem~\ref{thm:CumulativeProbGeneral} that the power-law behavior for the affine model extends to thresholds $k$ that are appropriately growing functions of the network size.

\subsection{Perturbations of the composition}
Since the form of~~\eqref{eq:relationLoadCap} is rather fragile, we explore the robustness of the power-law behavior as observed in~\eqref{eq:AffineOurApproximation} and~\eqref{eq:AffineExceedanceOurApproximation} with respect to perturbations of~\eqref{eq:relationLoadCap}. Specifically, we consider compositions $F \circ l^N$ of the form
\begin{align}
F({l}^N(i))= \frac{\theta+i-1+\Delta(i,N)}{N},
\label{eq:FinitePerturbLoadFunction}
\end{align}
where $\Delta(\cdot,\cdot)$ represent the perturbations with respect to the corresponding affine case. We make suitable assumptions on the magnitude of the perturbations such that the power-law behavior prevails. Intuitively, this means that the perturbations $\Delta(i,N)$ are small for large values of $i \leq k$ and $N$. The exact conditions, specified in Section~\ref{sec:ProofsOfC}, are technical and therefore omitted here. 

\begin{theorem}
Let $k:=k(N) \leq N$ be a positive function of $N$ such that $k \rightarrow \infty$ and $N-k \rightarrow \infty$ as $N \rightarrow \infty$, and let $F\circ l^N$ be as in~\eqref{eq:FinitePerturbLoadFunction} with $\Delta(\cdot,\cdot)$ satisfying properties (A)-(D) defined in Section~\ref{sec:ProofsOfC}. Then, there exists a constant $V(\theta,\Delta) \in (0,\infty)$ such that
\begin{align}
\lim_{N \rightarrow \infty} \sqrt{\frac{kN}{N-k}} \Prob(A^N \geq k) = V(\theta,\Delta).
\end{align}
\label{thm:PerturbConstant}
\end{theorem}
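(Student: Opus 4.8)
The plan is to recast the exceedance probability, via representation~\eqref{eq:OrderedProb}, as a first-passage probability for a skip-free-to-the-left random walk, and then combine a ballot-type identity with a local central limit theorem. Write $q_i := F(l^N(i)) = (\theta+i-1+\Delta(i,N))/N$ and let $X_i$ be the number of the $N$ uniform samples landing in $(q_{i-1},q_i]$, with $q_0 := 0$. Since $\U \le q_i$ is equivalent to $X_1+\cdots+X_i \ge i$ and the $X_i$ are nonnegative integers, the event $\{\A \ge k\}$ equals $\{S_i \ge 0 \text{ for } i=1,\ldots,k\}$ for the walk $S_i := \sum_{j=1}^i (X_j-1)$, whose increments are bounded below by $-1$. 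I would Poissonize: replace the truncated multinomial vector $(X_1,\ldots,X_k,X_{k+1})$ by independent variables $X_i \sim \mathrm{Poisson}(\mu_i)$ with $\mu_i := N(q_i-q_{i-1}) = 1+\Delta(i,N)-\Delta(i-1,N)$ for $i\ge 2$, $\mu_1 = \theta+\Delta(1,N)$, and $X_{k+1}\sim\mathrm{Poisson}(N(1-q_k))$; establish the asymptotics in this product model and then de-Poissonize by conditioning on $\sum_i X_i = N$. The de-Poissonization is precisely where the factor $\sqrt{(N-k)/N}$ enters, through the Poisson local limit theorem applied to $X_{k+1}$ forced to equal $N-k-S_k$.

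For the asymptotics in the Poisson model I would use that a skip-free-to-the-left walk obeys a Kemperman/cycle-lemma identity, so that $\Prob(S_i\ge 0,\ i\le k;\ S_k=s)$ equals $\Prob(S_k=s)$ times an explicit combinatorial factor of order $s/k$ (the first increment $\mu_1=\theta+\Delta(1,N)$ contributing the $\theta$); for the pure affine increments this is exactly the identity behind the quasi-binomial formula of Dobson et al.\ and recovers Theorem~\ref{thm:CumulativeProbGeneral}. Granting the perturbed analogue of this identity, $\Prob(S_i\ge 0,\ i\le k)$ reduces, up to lower-order terms, to $k^{-1}\,\E[g(S_k)\,\mathbbm{1}\{S_k\ge 0\}]$ for an explicit affine $g$, and then a local central limit theorem for the triangular array $\{X_i-\mu_i\}$ --- noting $\mathrm{Var}(S_k)=\sum_i\mu_i\sim k$ and $\E[S_k]=\theta-1+\Delta(k,N)$ --- converts this expectation, after the rescaling $s\asymp\sqrt{k}$, into a Gaussian integral. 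Collecting the de-Poissonization factor, the $k^{-1}$ from the ballot identity, and the $\sqrt{k}$-scale integral yields $\Prob(\A\ge k)\sim V(\theta,\Delta)\sqrt{(N-k)/(kN)}$, where $V(\theta,\Delta)$ equals $2\theta/\sqrt{2\pi}$ corrected by the limiting contribution of the drift $\Delta(k,N)$ and of a convergent product over $i$ of correction factors generated by the perturbed increment distributions. Conditions (A)--(D) should be designed so that these limits exist and lie strictly between $0$ and $\infty$, and so that $\Delta\equiv 0$ returns $V=2\theta/\sqrt{2\pi}$.

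The main obstacle is uniform control of the perturbation. One needs the local central limit theorem and the ballot identity to hold \emph{uniformly} for the non-identically-distributed, perturbed array, with remainder terms small enough to survive both the division by $k$ in $k^{-1}\E[g(S_k)\mathbbm{1}\{S_k\ge 0\}]$ and the amplification by $\sqrt{N}$ in the de-Poissonization step, and one needs a dominated-convergence argument to interchange the limit with the sum over $s$ when the perturbed increment means may drift away from $1$. This is exactly the role of properties (A)--(D): they bound the size and the variation of $\Delta(i,N)$ for $i\le k$ just enough to make the error analysis close and to guarantee convergence of the product and sum defining $V(\theta,\Delta)$, while still allowing $\Delta$ to genuinely alter the constant. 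I expect the bulk of Section~\ref{sec:ProofsOfC} to be the verification that these four conditions are indeed sufficient, together with the finiteness and strict positivity of $V(\theta,\Delta)$.

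A side remark: one could alternatively try to transfer Theorem~\ref{thm:CumulativeProbGeneral} directly by a change-of-measure comparison between the perturbed and affine boundaries, but since the relevant likelihood ratio does not factor through $S_k$ alone, this still requires the same conditional path estimates; the direct random-walk route above seems the cleaner one to carry out.
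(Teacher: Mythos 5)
Your reformulation of $\{A^N \geq k\}$ as the event that the skip-free-to-the-left walk $S_i=\sum_{j\leq i}(X_j-1)$ stays nonnegative is correct, and it is indeed the combinatorial structure underlying the quasi-binomial law in the affine case. The plan breaks, however, at its central step. The Kemperman/cycle-lemma identity rests on cyclic exchangeability of the increments; for the perturbed array the means $\mu_i=1+\Delta(i,N)-\Delta(i-1,N)$ are unequal, the cyclic shifts of $(X_1,\dots,X_k)$ are not equidistributed, and an identity of the form $\Prob(S_i\geq 0,\ i\leq k;\ S_k=s)\asymp (s/k)\Prob(S_k=s)$ is false already for $k=2$ with non-identical increment laws. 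The ``perturbed analogue'' that you propose to grant is therefore precisely the missing content, and it cannot be obtained by perturbing the i.i.d.\ identity term by term, because the ballot argument is global in the path. The only way to rescue the scheme is to exploit the equivalent reformulation of (C)--(D) (Lemma~\ref{lem:EquivalentCondition}): for every $\epsilon>0$ all but the first $M_\epsilon$ increments have means within $\epsilon$ of $1$, so one should condition exactly on the head of the process (equivalently, on $U_{(M_\epsilon)}^N$) and sandwich the remaining, essentially affine, tail between two exactly affine systems with parameters $\theta\pm\epsilon$ to which Theorem~\ref{thm:CumulativeProbGeneral} applies. That decomposition is absent from your sketch, and once it is added the Poissonization and random-walk machinery become redundant, since the tail is then handled by the already-proved affine result. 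This is in fact how the paper proceeds: condition on $U_{(M)}^N$ and split via the Markov property of order statistics (Lemma~\ref{lem:ConditioningM}), compute the head contribution through an explicit recursion for uniform order statistics (Lemma~\ref{lem:QuantifyProb}), prove the finitely-perturbed case (Proposition~\ref{prop:TruncatedExceedanceProb}), and then sandwich with $\theta\pm\epsilon$ and let $\epsilon\downarrow 0$, with a separate monotonicity argument giving the existence of $V(\theta,\Delta)=\lim_{M}V_M(\theta,\Delta)$.

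Two further points. First, you treat the definition, finiteness and positivity of $V(\theta,\Delta)$, and its independence of the particular sequence $k(N)$, as properties the conditions ``should be designed'' to deliver; the conditions are given, and these facts have to be extracted from them --- in the paper this is the $8\epsilon(1+\epsilon)/\sqrt{2\pi}$ bound on the gap between the upper and lower sandwiching constants together with a monotone-convergence argument. Second, the de-Poissonization is more delicate than stated when $k/N\to 1$: the typical value $s\asymp\sqrt{k}$ of $S_k$ then exceeds $\sqrt{N-k}$, so the local-limit factor from $X_{k+1}$ is not asymptotically constant over the relevant range of $s$ but instead reshapes the effective variance to $k(N-k)/N$; the final constant still comes out right, but only after carrying this interaction through the $s$-integral, which your sketch does not do.
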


That is, if the conditions on $\Delta(\cdot,\cdot)$ specified in Section~\ref{sec:ProofsOfC} are satisfied, there exists a finite, strictly positive constant $V(\theta,\Delta)$ (not depending on $k$) such that
\begin{align}
\Prob(A^N \geq k) \approx V(\theta,\Delta) \sqrt{\frac{N-k}{kN}}.
\label{eq:PerturbationAproximation}
\end{align}

The constant $V(\theta,\Delta)$, also defined in Section~\ref{sec:ProofsOfC}, is generally difficult to compute explicitly, but we can approximate its value with arbitrary precision. We present an algorithm for this in Section~\ref{sec:ProofsOfC}.

Whether the assumptions on the perturbations $\Delta(\cdot,\cdot)$ are satisfied, depends on the surplus capacity distribution $F(\cdot)$, the load surge function $l^N(\cdot)$ and the threshold $k$. In Section~\ref{sec:IdentifyCriticalRegion} we consider two examples where we identify thresholds $k$ such that the power-law behavior prevails. The most compelling example involves the case where the loads of the failed lines are equally redistributed over the remaining lines, see for example the model presented by Shortle~\cite{Shortle2013}. Suppose that each line has an initial load $a$, and hence the network has a total load of $a N$. Every time a line fails, the total load is redistributed over all surviving lines. The load surge function is then given by
\begin{align}
l^N(i) = \frac{a i}{N-i}.
\label{eq:1}
\end{align} 
Theorem~\ref{thm:PerturbConstant} suggests that when the step increments of the composition $F\circ l^N$ are approximately $1/N$ up to the $k$'th failure, the power-law behavior prevails. A Taylor expansion of the composition $F\circ l^N$ suggests that this occurs when $a F'(0)=1$. Yet, we observe that the slope of the load surge function increases as $i$ grows. For example, the slope is approximately $a/N$ for fixed $i$ and approximately $(a/(1-\alpha))/N > a /N$ for $i= \alpha N$ for some $\alpha \in (0,1)$. The threshold $k$ should thus be of a size such that no difference in slope is observable for the composition $F \circ l^N$ up to the threshold $k$. As one might expect, if the surplus capacity is uniformly distributed, then all thresholds $k=o(N)$ meet this condition. In Example~\ref{ex:Shortle} in Section~\ref{sec:IdentifyCriticalRegion}, we explain that generally all thresholds $k= o(\sqrt{N})$ lead to power-law behavior.

\section{Proofs for the affine case}
\label{sec:ProofsOfB}
When relation~\eqref{eq:relationLoadCap} holds, the blackout size follows a quasi-binomial distribution~\cite{Dobson2005}. This ensures an analytic expression for the probability distribution of the blackout, which we use to derive the asymptotic behavior.

\begin{proof}[Proof of Theorem~\ref{thm:ToBeProvedProb}]
The probability distribution of the number of failed lines is given by~\cite{Dobson2005}
\begin{align}
\Prob(\A=k)
&= \left\{ \begin{array}{ll}
\binom{N}{k} \frac{\theta}{N} \left( \frac{\theta+k}{N} \right)^{k-1} \left(1-\frac{k+\theta}{N}\right)^{N-k}, & \textrm{if } k \leq N-\theta,\\
0,  & \textrm{if } N-\theta < k < N, \\
\sum_{i=\lfloor N-\theta\rfloor+1}^N \binom{N}{i} \frac{\theta}{N} \left( \frac{\theta+i}{N} \right)^{i-1} \left(1-\frac{i+\theta}{N}\right)^{N-i},& \textrm{if } k = N.
\end{array}\right.
\label{eq:FailedLinesProb}
\end{align}

\noindent
Since we consider $k \in [k_\star, k^\star]$, we are only concerned with the probability for $k \leq N-\theta$. With Stirling's approximation~(formula (6.1.38) of~\cite{Abramowitz1964}) we have that for every integer $m > 0$
\[
m! = \sqrt{2 \pi} m^{m+1/2} e^{-m+\frac{y(m)}{12m}},
\]
for some $0< y(m) <1$. So the binomial term is bounded by
\begin{align*}
\binom{N}{k} &\geq \frac{1}{\sqrt{2\pi}} \frac{N^N}{k^k(N-k)^{N-k}} \sqrt{\frac{N}{k(N-k)}} e^{-\frac{1}{12k}} e^{-\frac{1}{12(N-k)}}, \\
\binom{N}{k} &\leq \frac{1}{\sqrt{2\pi}} \frac{N^N}{k^k(N-k)^{N-k}} \sqrt{\frac{N}{k(N-k)}} e^{\frac{1}{12N}}.
\end{align*} 

\noindent
Using these bounds for the binomial term in~\eqref{eq:FailedLinesProb} yields
\begin{align*}
k^{3/2}\sqrt{1-k/N} \Prob(\A=k) &\geq \frac{\theta}{\sqrt{2\pi}} \left( 1+\frac{\theta}{k} \right)^{k-1} \left(1-\frac{\theta}{N-k}\right)^{N-k} e^{-\frac{1}{12k}} e^{-\frac{1}{12(N-k)}}
\end{align*}
and
\begin{align*}
k^{3/2}&\sqrt{1-k/N} \Prob(\A=k) \leq \frac{\theta}{\sqrt{2\pi}} \left( 1+\frac{\theta}{k} \right)^{k-1} \left(1-\frac{\theta}{N-k}\right)^{N-k} e^{\frac{1}{12N}}
\end{align*}
for any $k_\star \leq k \leq k^\star$. Note that for every constant $\theta >0$ the functions $(1+\theta/x)^{x-1}$ and $(1-\theta/x)^{x}$ are both monotone increasing in $x>0$. Moreover, the function $e^{-1/(12 x)}$ is monotone increasing in $x>0$. Therefore, we obtain the lower bound
\begin{align*}
\sup_{k \in [k_\star,k^\star]} k^{3/2}\sqrt{1-k/N} \Prob(\A=k) & \geq \frac{\theta}{\sqrt{2\pi}}  \sup_{k \in [k_\star,k^\star]}  \left( 1+\frac{\theta}{k} \right)^{k-1} e^{-\frac{1}{12k}} \cdot \sup_{k \in [k_\star,k^\star]}  \left(1-\frac{\theta}{N-k}\right)^{N-k} e^{-\frac{1}{12(N-k)}}\\
&= \frac{\theta}{\sqrt{2\pi}} \left( 1+\frac{\theta}{k^\star} \right)^{k^\star-1} \left(1-\frac{\theta}{N-k_\star}\right)^{N-k_\star} e^{-\frac{1}{12k^\star}} e^{-\frac{1}{12(N-k_\star)}}.
\end{align*}
Moreover, since $(1+\theta/x)^{x-1} \leq e^\theta$ and $(1-\theta/x)^{x} \leq e^{-\theta}$ for all $x>0$, we have the upper bound
\begin{align*}
\sup_{k \in [k_\star,k^\star]} k^{3/2}\sqrt{1-k/N} \Prob(\A=k) &\leq \frac{\theta}{\sqrt{2\pi}} e^{\frac{1}{12N}}.
\end{align*}

We observe that both the upper bound and the lower bound converge to $\theta/\sqrt{2\pi}$ as $N \rightarrow \infty$ under the given assumptions on $k_\star$ and $k^\star$, implying that~\eqref{eq:ToBeProvedEquation} holds.
\end{proof}

\begin{figure}[htb]
\centering
\begin{tikzpicture}[xscale=0.4, yscale=1.6]
\draw[->,thick] (0,0) -- (0,3);
\draw[->, thick] (0,0) -- (21,0);

\draw plot file {Stair.txt};
\draw[red,dashed] plot[smooth] file {UBAs.txt};
\draw[blue,dashed] plot[smooth] file {LBAs.txt};

\draw [thick] (10.5,-0.05) -- (10.5,0.05);
\draw node[below] at (10.5,0) {$i=0.75N$};
\draw node[below] at (20,0) {$i$};
\end{tikzpicture}
\caption{Continuous bounds for $\Prob(A^N=i)$.}
\label{fig:FunctionIllustration}
\end{figure}

We indicated in Section~\ref{sec:MainResults} that the branching process approximation yields a different prefactor when $k=\alpha N$ for some $\alpha \in (0,1)$. In order to justify this claim we prove~\eqref{eq:DobsonApproximationFormal} next.

\begin{proof}[Proof of~\eqref{eq:DobsonApproximationFormal}]
The blackout size converges in distribution to a generalized Poisson distribution, i.e.~\cite{Dobson2005}
\begin{align*}
\lim_{N \rightarrow \infty} \Prob(A^N=k) = \theta \frac{(\theta+k)^{k-1}}{k!}e^{-(\theta+k)}.
\end{align*}

\newpage
\noindent
Applying Stirling's approximation, we obtain
\begin{align*}
\lim_{k \rightarrow \infty} k^{3/2} \lim_{N \rightarrow \infty} \Prob(A^N=k) &= \lim_{k \rightarrow \infty} k^{3/2} \theta \frac{(\theta+k)^{k-1}}{k!}e^{-(\theta+k)} \\
&= \lim_{k \rightarrow \infty} \frac{\theta}{\sqrt{2\pi}} \left(1+\frac{\theta}{k} \right)^{k-1}e^{-\theta} = \frac{\theta}{\sqrt{2 \pi}}.
\end{align*}
\end{proof}

Next, we turn to the asymptotic behavior of the probability that the blackout size exceeds the threshold $k$. For this, we bound the discrete density function of the blackout size by two continuous functions that grow arbitrarily close to one another for all $i \in [k,N-\log(N-k)]$. We conclude the proof by deriving the integral counterparts of the continuous functions and showing that $\Prob(A^N \geq N-\log(N-k))$ is asymptotically negligible.

\noindent
\begin{proof}[Proof of Theorem~\ref{thm:CumulativeProbGeneral}]
Set $k^\star = N-\log(N-k)$. Observe that $k^\star \geq k$ and note that this choice ensures $k^\star = N-o(N)$ and $N-k^\star \rightarrow \infty$ as $N \rightarrow \infty$. Due to Theorem~\ref{thm:ToBeProvedProb}, it follows that for every $\epsilon >0$ there is a $N_\epsilon >0$ such that for all $N \geq N_\epsilon$ and $i \in [k,k^\star]$ 
\begin{align*}
\frac{\theta}{\sqrt{2 \pi}} &i^{-3/2} (1-i/N)^{-1/2}(1-\epsilon) \leq \Prob(\A = i) \leq \frac{\theta}{\sqrt{2 \pi}} i^{-3/2} (1-i/N)^{-1/2}(1+\epsilon).
\end{align*}
Next, we use this observation to bound the exceedance probability from above and below and show that these bounds coincide as $\epsilon \downarrow 0$. 

An upper bound for the exceedance probability is given by
\begin{align*}
\Prob(\A \geq k) &\leq \Prob(A^N = k) +(1+\epsilon) \sum_{i=k+1}^{k^{\star}} \frac{\theta}{\sqrt{2 \pi}} i^{-3/2} (1-i/N)^{-1/2} +  \sum_{i=k^{\star}+1}^N \Prob(\A = i). 
\end{align*}

We indicate that we consider the first term separately from the second term, because this results in a nicer expression for the second term and the contribution of $\Prob(A^N = k) $ is asymptotically negligible. That is, for every integer $m$ Stirling's bound~\cite{Abramowitz1964} yields $ \sqrt{2 \pi} m^{m+1/2} e^{-m} \leq m! \leq e \sqrt{2 \pi} m^{m+1/2} e^{-m}$. Therefore,
\begin{align*}
\sqrt{\frac{kN}{N-k}} \Prob(A^N = k) & \leq e \sqrt{\frac{N}{N-k}} \sqrt{\frac{N}{N-k}}  \frac{\theta}{\theta + k}  \left( 1+ \frac{\theta}{k}\right)^{k} \left( 1- \frac{\theta}{N-k}\right)^{N-k} \\
& \leq e \frac{N}{k(N-k)} \frac{\theta}{\theta/k + 1} \longrightarrow 0
\end{align*}
as $N \rightarrow \infty$. 

For the second term, we consider the integral
\begin{align*}
\int_{x=k}^{N} x^{-3/2}\left(1-\frac{x}{N}\right)^{-1/2} \, dx &= \int_{u=\arcsin(\sqrt{k/N})}^{\pi/2} N^{-3/2} \sin(u)^{-3} (1-\sin(u)^2)^{-1/2} \cdot 2N \sin(u) \cos(u) \,du \\
&= 2 N^{-1/2} \int_{u=\arcsin(\sqrt{k/N})}^{\pi/2} \sin(u)^{-2} \,du \\
&= 2 N^{-1/2} \frac{\sqrt{1-k/N}}{\sqrt{k/N}} = 2\sqrt{\frac{N-k}{kN}},
\end{align*}
where we applied the variable substitution $x=N \sin(u)^2$. Then, the second term is bounded by
\begin{align*}
\sum_{i=k+1}^{k^{\star}} &\frac{\theta}{\sqrt{2 \pi}} i^{-3/2} (1-i/N)^{-1/2} \leq \int_{i=k}^{N} \frac{\theta}{\sqrt{2 \pi}} i^{-3/2} (1-i/N)^{-1/2} \, di = \frac{2\theta}{\sqrt{2\pi}} \sqrt{\frac{N-k}{kN}}.
\end{align*}

\noindent
Finally,
\begin{align*}
\sum_{i=k^{\star}+1}^N \Prob(\A = i) \leq (N-k^\star) \sup_{i \in (k^\star,N]} \Prob(A^N =i).
\end{align*}
To determine the supremum, we take a closer look at~\eqref{eq:FailedLinesProb}. For all $i \in (N-\theta, N)$, if any, $\Prob(A^N=i)=0$. Moreover, for all integers $i \in (k^\star, N-1]$, Stirling's bound yields
\begin{align*}
\binom{N}{i}\frac{\theta}{N}\left(\frac{\theta+i}{N}\right)^{i-1}\left(1-\frac{\theta+i}{N}\right)^{N-i} 
&\leq e \sqrt{\frac{N}{i(N-i)}} \frac{\theta}{\theta+i} \left(1+\frac{\theta}{i}\right)^{i}\left(1-\frac{\theta}{N-i}\right)^{N-i} \\
&\leq e \sqrt{\frac{N}{N-1}} \frac{\theta}{\theta+k^\star}.
\end{align*}
Therefore, $\sup_{i \in (k^\star, N-\theta]} \Prob(A^N=i) \leq c_1/k^\star$ for some constant $c_1>0$, and 
\begin{align*}
\Prob(A^N =N) &= \frac{\theta}{N} \left(1+\frac{\theta}{N} \right)^{N-1} + \sum_{i=\lfloor N-\theta\rfloor+1}^{N-1} \binom{N}{i} \frac{\theta}{N} \left( \frac{\theta+i}{N} \right)^{i-1} \left(1-\frac{\theta+i}{N}\right)^{N-i} \\
&\leq \frac{\theta e^{\theta}}{N} + \theta e \sqrt{\frac{N}{N-1}} \frac{\theta}{\theta+k^\star} \leq \frac{c_2}{k^\star}
\end{align*}
for some constant $c_2>0$. Recall that $k \leq k^\star = N-\log(N-k)$. Setting $c=\max\{c_1,c_2\}$ yields
\begin{align*}
\sqrt{\frac{kN}{N-k}}  (N-k^\star) \sup_{i \in (k^\star,N]} \Prob(A^N =i) \leq c \frac{\sqrt{kN}}{k^\star}\frac{N-k^\star}{\sqrt{N-k}} = c \underbrace{\frac{\sqrt{k/N}}{1-\log(N-k)/N}}_{=O(1)} \underbrace{ \frac{\log(N-k)}{\sqrt{N-k}}}_{= o(1)}
\end{align*}
as $N \rightarrow \infty$, since $N-k \rightarrow \infty$ as $N \rightarrow \infty$. We conclude that
\begin{align*}
\limsup_{N \rightarrow \infty}  \sqrt{\frac{k N}{N-k}} \Prob(\A \geq k) \leq (1+\epsilon) \frac{2\theta}{\sqrt{2 \pi}}.
\end{align*}

\noindent
A lower bound is given by
\begin{align*}
\Prob(\A \geq k) &\geq (1-\epsilon) \sum_{i=k}^{k^{\star}} \frac{\theta}{\sqrt{2 \pi}} i^{-3/2} (1-i/N)^{-1/2} \geq  (1-\epsilon) \int_{i=k}^{k^\star} \frac{\theta}{\sqrt{2 \pi}} i^{-3/2} (1-i/N)^{-1/2} \, di \\
&= (1-\epsilon) \frac{2\theta}{\sqrt{2\pi}}\left( \sqrt{\frac{N-k}{kN}}- \sqrt{\frac{N-k^\star}{k^\star N}} \right).
\end{align*}
It follows that
\begin{align*}
\liminf_{N \rightarrow \infty} &\sqrt{\frac{kN}{N-k}} \Prob(\A \geq k) \geq \liminf_{N \rightarrow \infty} (1-\epsilon) \frac{2\theta}{\sqrt{2\pi}}\left( 1- \underbrace{\sqrt{\frac{k}{k^\star}}}_{=O(1)} \underbrace{\sqrt{\frac{N-k^\star}{N-k}}}_{=o(1)} \right) = (1-\epsilon)\frac{2\theta}{\sqrt{2\pi}}.
\end{align*}
As $\epsilon \downarrow 0$, the limsup and liminf coincide.
\end{proof}

\section{Proofs for perturbations of the composition}
\label{sec:ProofsOfC}
Whether we obtain power-law behavior for the black-out size distribution depends on the surplus capacity distribution, the load surge function and the threshold $k$. Due to relation~\eqref{eq:OrderedProb}, we observe that the relation between the surplus capacity distribution and the load surge function is captured by the composition $F\circ l^N$, see Figure~\ref{fig:InteractionExplanation}. In this section we prove that if $F\circ l^N$ has a form as in~\eqref{eq:FinitePerturbLoadFunction}, with suitable conditions on the perturbations $\Delta(\cdot,\cdot)$, the power-law behavior for the exceedance probability prevails. 

\begin{figure}[htb]
\centering
\begin{subfigure}{0.45 \textwidth}
\centering
\scalebox{0.8}{\begin{tikzpicture}[xscale=2, yscale=4]
\draw[->] (0,0) -- (0,1.2);
\draw[->] (0,0) -- (4.2,0);

\draw plot[smooth] file {test2.txt};
\draw plot file {trap2.txt};

\draw node[] at (3.8181818,2.51-1.65) {$F(x)$};
\draw node[below] at (4.047,0) {$x$};
\draw node[below] at (2.2092,0) {${l}^N(k)$};

\foreach \i in {0., 0.0391694, 0.0804575, 0.124105, 0.170401, 0.219684, 0.27237, 0.328962, 0.390085, 0.45653, 0.529312, 0.60977, 0.699713, 0.801681, 0.919395, 1.05862, 1.22903, 1.44871, 1.75834, 2.28765 , 3}
{
\draw (\i,-0.025) -- (\i,0.025);
}

\end{tikzpicture}}
\end{subfigure}
\begin{subfigure}{0.45 \textwidth}
\centering
\scalebox{0.8}{\begin{tikzpicture}[xscale=2, yscale=4]
\draw[->] (0,0) -- (0,1.2);
\draw[->] (0,0) -- (4.2,0);

\draw[] plot[smooth] file {ThetaLine.txt};
\draw plot[smooth] file {InfinitePerturb2.txt};

\draw[] (2.65,-0.02) -- (2.65,0.02);
\draw node[below] at (2.65,0) {$k$};

\draw[<->] (0.30,0.26) -- (0.30,0.17);
\draw node[below right] at (0.30,0.17) {$\Delta(i,N)$};

\draw node[below] at (4,0) {$i$};
\draw node[below] at (4,0.9) {$F(l^N(i))$};

\end{tikzpicture}}
\end{subfigure}
\caption{Relation surplus capacity distribution function and load surge function.}
\label{fig:InteractionExplanation}
\end{figure}
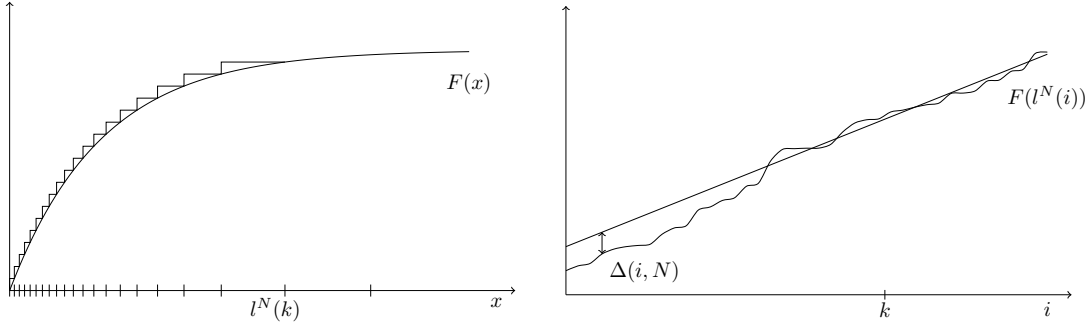

What are the suitable conditions on the perturbations? In view of~\eqref{eq:FinitePerturbLoadFunction}, we note that the perturbations $\Delta(\cdot,\cdot)$ already satisfy two properties because $F \circ l^N$ is linearly increasing with support $[0,1]$, namely
\begin{itemize}
\item[(A)] $\Delta(i,N) \in [-(\theta+i-1),N-(\theta+i-1)]$ for every $(i,N) \in \mathbb{N} \times \mathbb{N}$,
\item[(B)] for every fixed $N \in \mathbb{N}$, $\Delta(i+1,N) \geq \Delta(i,N)-1$ for all $i \geq \mathbb{N}$.
\end{itemize}
In addition, we require that the perturbations $\Delta(i,N)$ are small for large values of $i$ and $N$. Formally, we assume that
\begin{itemize}
\item[(C)] $\Delta(i,N) \rightarrow \Delta(i)$ pointwise as $N \rightarrow \infty$ for some well-defined function $\Delta(\cdot): \mathbb{N} \rightarrow \mathbb{R}$ satisfying $\lim_{i \rightarrow \infty}\Delta(i)=0$.
\item[(D)] For all $i(N) \leq k$ satisfying $\lim_{N \rightarrow \infty} i(N) = \infty$, it must hold that $\lim_{N \rightarrow \infty} \Delta(i(N),N) = 0$.
\end{itemize}

\noindent
Write $c_{i,N}= N F(l^N(i)) = \theta+i-1+\Delta(i,N)$ and for every fixed $i \in \mathbb{N}$, 
\begin{align}
c_{i} = \lim_{N \rightarrow \infty} N F(l^N(i)) = \theta+i-1+\Delta(i).
\label{eq:DefinitionCAss}
\end{align}
Note that by conditions~(A)-(D), $c_i$, $i \in \mathbb{N}$, is a well-defined non-decreasing sequence that tends to the line $\theta+i-1$ as $i$ grows large. We will show that these conditions result in power-law behavior for the exceedance probability. 

For this, we leverage two basic asymptotic properties formulated in the following two lemmas.
\begin{lemma}
Let $k$ be a function of $N$ such that both $k \rightarrow \infty$ and $N-k \rightarrow \infty$ as $N \rightarrow \infty$. Then for every fixed $M_1, M_2 \in~\mathbb{N}$,
\begin{align*}
\lim_{N \rightarrow \infty} \sqrt{\frac{kN}{N-k}} \Prob&\left( U_{(i)}^{N-M_1} \leq \frac{\theta+i-1}{N-M_1},  \;\;\; \forall i\leq k-M_2 \right) \\
&= \frac{2\theta}{\sqrt{2\pi}}.
\end{align*}
\label{lem:Insensitivity1}
\end{lemma}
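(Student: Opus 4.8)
The plan is to reduce the statement to Theorem~\ref{thm:CumulativeProbGeneral} by a change of variables. The first, and only genuinely model-specific, step is to recognise that by the representation~\eqref{eq:OrderedProb} specialised to the affine relation~\eqref{eq:relationLoadCap}, the probability in the lemma is exactly the affine exceedance probability for a network of $N-M_1$ lines evaluated at threshold $k-M_2$. Writing $A^{N'}$ for the blackout size of the affine model with $N'$ lines and the same constant $\theta>0$, we have
\[
\Prob\left( U_{(i)}^{N-M_1} \leq \frac{\theta+i-1}{N-M_1}, \;\;\; \forall\, i\leq k-M_2 \right) = \Prob\left( A^{N-M_1} \geq k-M_2 \right).
\]

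Next I would set $N':=N-M_1$ and $k':=k-M_2$ and check that $(N',k')$ satisfies the hypotheses of Theorem~\ref{thm:CumulativeProbGeneral} for all large $N$. Since $M_1$ and $M_2$ are fixed, $k'\to\infty$ follows from $k\to\infty$, the bound $k'\leq N'$ holds eventually because $N-k\to\infty$, and $N'-k'=(N-k)+(M_2-M_1)\to\infty$, again because $N-k\to\infty$. Applying Theorem~\ref{thm:CumulativeProbGeneral} with network size $N'$ in place of $N$ and threshold $k'$ (legitimate since $N'\to\infty$ and the hypotheses on $k'$ hold) then gives
\[
\lim_{N\to\infty}\sqrt{\frac{k'N'}{N'-k'}}\,\Prob\left(A^{N'}\geq k'\right)=\frac{2\theta}{\sqrt{2\pi}}.
\]

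It then only remains to replace the normalising factor $\sqrt{kN/(N-k)}$ by the matching one $\sqrt{k'N'/(N'-k')}$. Their ratio equals $\bigl(\tfrac{k}{k'}\bigr)^{1/2}\bigl(\tfrac{N}{N'}\bigr)^{1/2}\bigl(\tfrac{N'-k'}{N-k}\bigr)^{1/2}$, and each of the three factors tends to $1$ (using $k\to\infty$, $N\to\infty$ and $N-k\to\infty$, respectively). Multiplying the displayed limit by this ratio yields the claim. I do not expect a genuine obstacle here: the substance is already contained in Theorem~\ref{thm:CumulativeProbGeneral}, so the work reduces to the bookkeeping of the shifted indices and the elementary check that the two normalising constants are asymptotically equivalent. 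If a self-contained derivation were preferred, one could instead mirror the proof of Theorem~\ref{thm:CumulativeProbGeneral} using the explicit quasi-binomial law of $A^{N-M_1}$, but that would merely duplicate existing work.
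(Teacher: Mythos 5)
Your proposal is correct and follows essentially the same route as the paper: both identify the probability as the affine exceedance probability for a network of $N-M_1$ lines at threshold $k-M_2$, apply Theorem~\ref{thm:CumulativeProbGeneral} to that shifted system, and absorb the mismatch between $\sqrt{kN/(N-k)}$ and $\sqrt{k'N'/(N'-k')}$ by noting the ratio of the two normalising factors tends to $1$. No gaps.
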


\begin{proof}
Recall Equations~\eqref{eq:OrderedProb} and~\eqref{eq:relationLoadCap}, and observe that the case with $M_1=M_2=0$ is implied by Theorem~\ref{thm:CumulativeProbGeneral}. The general case follows from a straightforward calculation:
\begin{align*}
\sqrt{\frac{kN}{N-k}} &\Prob\left( U_{(i)}^{N-M_1} \leq \frac{\theta+i-1}{N-M_1},  \;\;\; i=1,...,k-M_2 \right) \\
 &=\underbrace{\sqrt{\frac{k}{k-M_2}}}_{\rightarrow 1} \underbrace{\sqrt{\frac{N}{N-M_1}}}_{\rightarrow 1} \underbrace{\sqrt{\frac{N-k+M_2-M_1}{N-k}}}_{\rightarrow 1}  \\
& \hspace{4cm} \cdot \underbrace{\sqrt{\frac{(k-M_2)(N-M_1)}{N-k+M_2-M_1}}  \cdot \Prob\left( U_{(i)}^{N-M_1} \leq \frac{\theta+i-1}{N-M_1}, \;\;\; i=1,...,k-M_2 \right)}_{\rightarrow \frac{2 \theta}{\sqrt{2\pi}}} .
\end{align*}
The last convergence follows from noting~\eqref{eq:OrderedProb} and applying Theorem~\ref{thm:CumulativeProbGeneral} to a network with $N-M_1$ lines and threshold $k-M_2$.
\end{proof}

\begin{lemma}
Let $k$ be a function of $N$ such that both $k \rightarrow \infty$ and $N-k \rightarrow \infty$ as $N \rightarrow \infty$. Then for every fixed $M \in \mathbb{N}$
\begin{align}
\lim_{N \rightarrow \infty}  \sqrt{\frac{kN}{N-k}} \Prob&\left( U_{(i)}^{N-M} \leq \frac{\theta+i-1}{N}, \;\;\; i=1,...,k \right)   = \frac{2\theta}{\sqrt{2\pi}}.
\label{eq:Insensitivity2}
\end{align}
\label{lem:Insensitivity2}
\end{lemma}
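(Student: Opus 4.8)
The plan is to prove \eqref{eq:Insensitivity2} by coupling the $(N-M)$-point event with the affine $N$-point probability of Theorem~\ref{thm:CumulativeProbGeneral} and showing the two are asymptotically equal. Take $N$ i.i.d.\ uniforms, split off $M$ of them as ``ghosts'', and set, for $1\le j\le k$, $Y_j=\#\{\text{non-ghost points}\le\tfrac{\theta+j-1}{N}\}$ and $Z_j=\#\{\text{ghost points}\le\tfrac{\theta+j-1}{N}\}$. By \eqref{eq:OrderedProb} the event in \eqref{eq:Insensitivity2} is $E=\{Y_j\ge j,\ j\le k\}$, whereas $\{Y_j+Z_j\ge j,\ j\le k\}$ has probability $\Prob(A^N\ge k)$ with $\sqrt{kN/(N-k)}\,\Prob(A^N\ge k)\to\tfrac{2\theta}{\sqrt{2\pi}}$. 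Since $Z_j\ge0$ we have $E\subseteq\{A^N\ge k\}$, which already gives $\limsup_N\sqrt{kN/(N-k)}\,\Prob(E)\le\tfrac{2\theta}{\sqrt{2\pi}}$ (alternatively: $\{U_{(i)}^{N-M}\le\tfrac{\theta+i-1}{N}\}\subseteq\{U_{(i)}^{N-M}\le\tfrac{\theta+i-1}{N-M}\}$ and Lemma~\ref{lem:Insensitivity1} with $M_1=M$, $M_2=0$).

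Everything hinges on the reverse bound $\Prob(E)\ge(1-o(1))\Prob(A^N\ge k)$, i.e.\ $\Prob(\{A^N\ge k\}\setminus E)=o(\Prob(A^N\ge k))$. When $k=o(N)$ this is immediate from a direct sandwich: $\tfrac{\theta+i-1}{N}\ge\tfrac{(\theta-\eta)+i-1}{N-M}$ holds for all $i\le k$ and $N$ large (the binding index $i=k$ needs only $\eta\ge M(\theta+k-1)/N\to0$), so $E\supseteq\{U_{(i)}^{N-M}\le\tfrac{(\theta-\eta)+i-1}{N-M},\ i\le k\}$ and Lemma~\ref{lem:Insensitivity1} together with $\eta\downarrow0$ suffices. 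For $k$ comparable to $N$ this comparison loses a term $M\limsup(k/N)$, and one must argue instead that the ghost-induced relaxation of the barrier is felt only late. Indeed $Z_j\equiv0$ for $j<j_0:=\lceil N\min_\ell W_\ell-\theta+1\rceil$ and $\Prob(j_0\le L)\le ML/N$ for every $L$; and on $\{A^N\ge k\}\setminus E$ the surplus $Y_j-j$ must return to $0$ at some time $j-1\ge j_0-1$ (the first index $j$ with $Y_j<j$ has $Y_{j-1}=j-1$, since $Y$ is nondecreasing and $Z_{j'}=0$ for $j'<j_0$). But conditionally on $\{A^N\ge k\}$ the surplus process behaves like a discrete Brownian meander: at any time $\asymp j_0$ it is of order $\sqrt{j_0}$, so it stays positive throughout $[j_0,k]$ with conditional probability tending to $1$ once $j_0\to\infty$. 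Choosing $L_N\to\infty$ slowly enough that $ML_N/N=o(\Prob(A^N\ge k))$ — possible because $M$ is fixed and $\Prob(A^N\ge k)\asymp\sqrt{(N-k)/(kN)}\gg 1/N$ — and treating $\{j_0\le L_N\}$ by the cheap bound, one obtains $\Prob(\{A^N\ge k\}\setminus E)=o(\Prob(A^N\ge k))$, which together with the upper bound yields \eqref{eq:Insensitivity2}.

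The genuine obstacle is the meander statement: one needs quantitative control, conditionally on $\{A^N\ge k\}$, of the empirical count process $Y_j-j$ — that it stays well above $0$ for $j$ away from the two ends $1$ and $k$, with fluctuation scale $\sqrt{\min(j,N-k)}$, and in particular that it does not return to $0$ on $[j_0,k]$. A clean way to extract this is from the explicit quasi-binomial formula~\eqref{eq:FailedLinesProb}: use the Markov property of the uniform order statistics to freeze the history at time $j_0-1$, and the fact that in the affine model the survival probability is proportional to the starting surplus, so that ``relaxing the post-$j_0$ barrier downward by $Z_j\le M$'' multiplies the continuation probability by a factor $1+O(M/(\text{surplus}))=1+o(1)$. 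Verifying these conditional estimates uniformly over $k$ (including $k$ very close to $N$) and over the ghost configuration is where the real work lies.
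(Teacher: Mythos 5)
Your upper bound (the ghost-point coupling $E\subseteq\{A^N\ge k\}$, or equivalently the monotonicity $\Prob(U_{(i)}^{N-M}\le x)\le\Prob(U_{(i)}^{N}\le x)$ combined with Theorem~\ref{thm:CumulativeProbGeneral}) is correct and coincides with the paper's. Your lower bound is also fine in the regime $k=o(N)$, where the barrier comparison $\tfrac{\theta+i-1}{N}\ge\tfrac{\theta-\eta+i-1}{N-M}$ holds for all $i\le k$ with $\eta\downarrow 0$ and Lemma~\ref{lem:Insensitivity1} closes the argument.

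The gap is the case $k\asymp N$ (and $k$ close to $N$), which is exactly the regime the lemma is needed for. There your argument rests on the assertion that, conditionally on $\{A^N\ge k\}$, the surplus process $Y_j+Z_j-j$ stays above level $M$ throughout $[j_0,k]$ with conditional probability tending to $1$, uniformly in $k$ and in the ghost configuration. This is a nontrivial quantitative statement about a conditioned bridge-type walk: the count process is pinned (all $N$ points lie below $1$), so for $k$ near $N$ the conditioned surplus at time $k$ is only of order $\sqrt{N-k}$; moreover $j_0$ is random and can land near either endpoint, and it is determined by the ghost points, which also enter the conditioning event. You explicitly flag this estimate as ``where the real work lies'', and indeed it is: as written, the proposal reduces \eqref{eq:Insensitivity2} to an unproven conditional fluctuation bound rather than proving it. The paper sidesteps all path-level conditioning via R\'enyi's representation: writing $U_{(i)}^{N-1}\stackrel{d}{=}S_i/S_N$ with $S_i$ partial sums of i.i.d.\ exponentials, deleting one sample point amounts to replacing the normalizer $S_{N+1}$ by $S_N$, i.e.\ to a single global rescaling of the barrier by $S_{N+1}/S_N=1+E_{N+1}/S_N$, which is controlled by the elementary bound $\Prob(E_{N+1}>\epsilon S_N)=(1+\epsilon)^{-N}$; the rescaled event is again of affine type, so Theorem~\ref{thm:CumulativeProbGeneral} applies, and induction on $M$ finishes the proof. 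If you wish to salvage your route, you must actually prove the meander/excursion estimate — e.g.\ by freezing the order statistics at time $j_0$ with the Markov property and using the explicit quasi-binomial continuation probability, as you hint — but that is a substantial additional argument, not a detail to be deferred.
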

\begin{proof}
Note that 
\begin{align*}
\limsup_{N \rightarrow \infty}  \sqrt{\frac{kN}{N-k}} \Prob&\left( U_{(i)}^{N-M} \leq \frac{\theta+i-1}{N}, \;\;\; i=1,...,k \right) \\
& \hspace{3cm} \leq \limsup_{N \rightarrow \infty}  \sqrt{\frac{kN}{N-k}} \Prob\left( U_{(i)}^{N} \leq \frac{\theta+i-1}{N}, \;\;\; i=1,...,k \right) = \frac{2\theta}{\sqrt{2\pi}}.
\end{align*} 

To obtain a lower bound, we first consider the case $M=1$. Consider a Poisson process with unit rate where the epoch of the $i$'th event is denoted by $S_i=\sum_{j=1}^i E_j$ with $E_j$ standard independent exponential random variables for all $j \geq 1$. Note that, given $S_i=t$, the joint distribution of $(S_1,...,S_{i-1})$ is the same as the joint distribution of $i-1$ ordered independent uniform random variables on $(0,t)$. Therefore,~\eqref{eq:Insensitivity2} with $M=1$ is equivalent to
\begin{align*}
\lim_{N \rightarrow \infty} \sqrt{\frac{kN}{N-k}} \Prob\left( \frac{S_i}{S_{N}} \leq \frac{\theta+i-1}{N}, \;\;\; i=1,...,k \right) = \frac{2\theta}{\sqrt{2\pi}}.
\end{align*}
We observe that for every $\epsilon>0$,
\begin{align*}
\Prob\left( \frac{S_i}{S_{N+1}} \leq \frac{\theta+i-1}{N}, \;\;\; \forall i\leq k \right) &\leq \Prob\left( S_i \leq \frac{(\theta+i-1)S_{N+1}}{N},\;\;\; \forall i\leq k; E_{N+1} \leq \epsilon S_N \right)+ \Prob\left( E_{N+1} > \epsilon S_N \right)\\
&\leq \Prob\left( S_i \leq (\theta+\epsilon+i-1)\frac{S_{N}}{N}, \;\;\; \forall i\leq k \right) + \Prob\left( E_{N+1} > \epsilon S_N \right).
\end{align*}
Since
\begin{align*}
\Prob\left( E_{N+1} > \epsilon S_N \right) &= \E\left(e^{-\epsilon S_N}\right) = \E\left(e^{-\epsilon S_1}\right)^N  = \left(\frac{1}{1+\epsilon}\right)^N,
\end{align*}
it follows that
\begin{align*}
\liminf_{N \rightarrow \infty} &\sqrt{\frac{kN}{N-k}} \Prob\left( \frac{S_i}{S_{N}} \leq \frac{\theta+i-1}{N} , \;\;\; \forall i\leq k \right) \\
& \hspace{2cm}\geq \liminf_{N \rightarrow \infty} \sqrt{\frac{kN}{N-k}} \cdot \left(\Prob\left( \frac{S_i}{S_{N+1}} \leq \frac{\theta-\epsilon+i-1}{N}, \;\;\; \forall i\leq k \right) - \left(\frac{1}{1+\epsilon}\right)^N \right)= \frac{2(\theta-\epsilon)}{\sqrt{2\pi}},
\end{align*}
for every $\epsilon >0$. The result for $M=1$ follows by letting $\epsilon \downarrow 0$. Inductively applying the result for $M=1$ until the fixed $M >0$ is reached concludes the proof.
\end{proof}

In view of~\eqref{eq:1A},~\eqref{eq:OrderedProb} and~\eqref{eq:FinitePerturbLoadFunction}, it is convenient to introduce the stopping times
\begin{align}
\tau^N_{\theta,\Delta} = \min\left\{i \in \mathbb{N} : U^N_{(i)} > \frac{\theta+i-1+\Delta(i,N)}{N} \right\}-1
\label{eq:TauDefinition}
\end{align}
for all constants $\theta \in \mathbb{R}$ and functions $\Delta: \mathbb{N} \times \mathbb{N} \rightarrow \mathbb{R}$. In particular, if the constant $\theta$ and function $\Delta(\cdot,\cdot)$ are chosen as in~\eqref{eq:FinitePerturbLoadFunction}, then $A^N=\tau^N_{\theta,\Delta}$. Yet, the advantage of the notation as in~\eqref{eq:TauDefinition} appears when we compare the asymptotic exceedance probability for different constants $\theta$ and functions $\Delta(\cdot,\cdot)$. 

Our derivation of the asymptotic behavior of the exceedance probability makes use of similar arguments multiple times in the proof. We present these arguments separately by means of the next two lemmas.

\begin{lemma}
Let $k:=k(N) \leq N$ be a positive function of $N$ such that $k \rightarrow \infty$ and $N-k \rightarrow \infty$ as $N \rightarrow \infty$. Let $c_i$, $i \in \mathbb{N}$ be as in~\eqref{eq:DefinitionCAss} and for some fixed $M \in \mathbb{N}$, suppose $\Delta(i,N)=0$ for all $i \geq M$ and $N \geq N_0$ for some $N_0 \in \mathbb{N}$. For all constants $a,b \in \mathbb{R}_{\geq 0}$ with $a \leq b$,
\begin{align}
\begin{split}
\lim_{N \rightarrow \infty} \sqrt{\frac{kN}{N-k}} \Prob&\left(\tau^N_{\theta,\Delta} \geq k ; U_{(M)}^N \in \left[\frac{a}{N},\frac{b}{N}\right] \right)  \\
&\hspace{1.5cm}= \frac{2}{\sqrt{2 \pi}}\int_a^{b} \Prob\left(\Uimeen \leq \frac{c_{i}}{y}, \;\;\; \forall i \leq M-1 \right)  (\theta+M-y) \frac{y^{M-1}}{(M-1)!}e^{-y} \, dy .
\end{split}
\label{eq:InterchangeExpression}
\end{align}
\label{lem:ConditioningM}
\end{lemma}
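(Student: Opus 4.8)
The plan is to condition on the value of $U_{(M)}^N$ and exploit the classical fact that, given $U_{(M)}^N = y/N$, the first $M-1$ order statistics $U_{(1)}^N,\dots,U_{(M-1)}^N$ are distributed as the order statistics of $M-1$ i.i.d.\ uniforms on $(0,y/N)$, while the remaining $N-M$ order statistics $U_{(M+1)}^N,\dots,U_{(N)}^N$ are distributed as the order statistics of $N-M$ i.i.d.\ uniforms on $(y/N,1)$, and the two families are conditionally independent. Rescaling the top family by $1-y/N$, the conditional law of $(U_{(M+1)}^N,\dots,U_{(N)}^N)$ matches that of $y/N$ plus $(1-y/N)$ times $N-M$ uniform order statistics on $(0,1)$. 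Since $\Delta(i,N)=0$ for $i\ge M$ and $N\ge N_0$, the event $\{\tau^N_{\theta,\Delta}\ge k\}$ factorizes: it is the intersection of the ``bottom'' event $\{U_{(i)}^N \le (\theta+i-1+\Delta(i,N))/N,\ \forall i\le M-1\}$ (which, after rescaling and using condition~(C), contributes the factor $\Prob(U_{(i)}^{M-1}\le c_i/y,\ \forall i\le M-1)$ in the limit), the single constraint at $i=M$ (namely $U_{(M)}^N \le (\theta+M-1)/N$, i.e.\ $y\le \theta+M-1$, but one must be careful: we also need $U_{(M)}^N=y/N$ to be a jump point — see below), and the ``top'' event on $U_{(M+1)}^N,\dots,U_{(k)}^N$, which after the affine rescaling becomes, up to lower-order corrections, an event of the type appearing in Lemma~\ref{lem:Insensitivity1} / Lemma~\ref{lem:Insensitivity2} on a network of $N-M$ lines with threshold $k-M$.

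Concretely, I would first write, using the density $f_{U_{(M)}^N}(y/N)\cdot N^{-1}\,dy = \binom{N}{M}M\,(y/N)^{M-1}(1-y/N)^{N-M}N^{-1}\,dy$, which behaves like $\tfrac{y^{M-1}}{(M-1)!}e^{-y}\,dy$ as $N\to\infty$ uniformly for $y$ in the compact interval $[a,b]$,
\begin{align*}
\Prob&\left(\tau^N_{\theta,\Delta}\ge k;\ U_{(M)}^N\in[a/N,b/N]\right)
= \int_a^b \Prob\Big(U_{(i)}^N\le \tfrac{c_{i,N}}{N},\ \forall i\le M-1 \,\Big|\, U_{(M)}^N=\tfrac{y}{N}\Big)\\
&\quad\times \mathbf{1}\{y\le\theta+M-1\}\cdot \Prob\Big(\text{top order stats satisfy the }i=M+1,\dots,k\text{ constraints}\,\Big|\,U_{(M)}^N=\tfrac yN\Big)\, f_{U_{(M)}^N}(y/N)N^{-1}\,dy.
\end{align*}
The bottom factor, by the conditional-uniform representation and rescaling by $N/y$, equals $\Prob(U_{(i)}^{M-1}\le c_{i,N}/y,\ \forall i\le M-1)$, which converges to $\Prob(U_{(i)}^{M-1}\le c_i/y,\ \forall i\le M-1)$ by condition~(C) (finitely many constraints, continuity of the joint law, and the fact that the limiting event has probability continuous in the thresholds away from a null set of $y$). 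For the top factor: conditionally, $U_{(M+j)}^N = y/N + (1-y/N)\,V_{(j)}^{N-M}$ for $V^{N-M}$ uniform order statistics on $(0,1)$, so the constraint $U_{(M+j)}^N\le (\theta+M+j-1)/N$ becomes $V_{(j)}^{N-M}\le \big(\theta+M+j-1-y\big)/\big(N-y\big)$; comparing this denominator $N-y$ with $N-M$ and absorbing the bounded shift contributes only $(1+o(1))$ factors, so by Lemma~\ref{lem:Insensitivity2} (with the threshold $k-M$ and the affine offset $\theta+M-y$, noting $\theta+M+j-1-y = (\theta+M-y)+(j-1)$), the top factor times $\sqrt{k(N-M)/(N-k)}$ converges to $\tfrac{2(\theta+M-y)}{\sqrt{2\pi}}$. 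Since $\sqrt{kN/(N-k)}/\sqrt{k(N-M)/(N-k)}\to1$, and the density factor contributes $\tfrac{y^{M-1}}{(M-1)!}e^{-y}$, the integrand converges pointwise to $\Prob(U_{(i)}^{M-1}\le c_i/y,\ \forall i\le M-1)\cdot\tfrac{2(\theta+M-y)}{\sqrt{2\pi}}\cdot\tfrac{y^{M-1}}{(M-1)!}e^{-y}$ on $[a,\min(b,\theta+M-1)]$; noting $\theta+M-y\ge0$ exactly when $y\le\theta+M$, but for $y\in(\theta+M-1,\theta+M)$ the indicator $\mathbf 1\{y\le\theta+M-1\}$ already kills the integrand (equivalently the constraint at $i=M$ fails since $\tau$ would stop before $M$), so extending the integral to $b$ with the factor $(\theta+M-y)$ is consistent with the stated right-hand side provided $b\le\theta+M$; the general case follows by the same reasoning, as the integrand is non-negative and the $i=M$ constraint trims it exactly at $y=\theta+M-1$. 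Finally I would justify interchanging limit and integral by dominated convergence: the integrand is bounded uniformly in $N$ and $y\in[a,b]$ by a constant times $y^{M-1}e^{-y/2}$ (using the uniform Stirling-type bound on $f_{U_{(M)}^N}$ from the proof of Theorem~\ref{thm:ToBeProvedProb} and the fact that all probability factors are $\le1$ while $\sqrt{kN/(N-k)}$ times the top probability is uniformly bounded by Theorem~\ref{thm:CumulativeProbGeneral}-type estimates), which is integrable on $[a,b]$.

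The main obstacle I anticipate is handling the top factor rigorously and \emph{uniformly} in $y$ over the compact interval: Lemma~\ref{lem:Insensitivity2} as stated gives a limit for a fixed affine offset, whereas here the offset $\theta+M-y$ varies continuously with the integration variable $y$, and the denominator shift from $N$ to $N-y$ also depends on $y$. I would address this by proving (or invoking) a version of Lemma~\ref{lem:Insensitivity1}/\ref{lem:Insensitivity2} that is uniform for the offset ranging over a compact set — this follows from the same Stirling sandwiching used in Theorem~\ref{thm:ToBeProvedProb}, since all the bounds there are monotone and continuous in $\theta$ — and then use dominated convergence to pass the limit inside the $dy$ integral. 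The secondary nuisance is the boundary behaviour near $y=\theta+M-1$ where the continuous limiting integrand vanishes linearly; this causes no real trouble since the factor $(\theta+M-y)$ is exactly what the indicator produces in the limit, but it must be stated carefully to match the clean right-hand side of~\eqref{eq:InterchangeExpression}.
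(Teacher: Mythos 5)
Your proposal is correct and follows essentially the same route as the paper: condition on $U_{(M)}^N=y/N$, use the conditional independence and rescaling of the order statistics below and above the $M$'th one, identify the limit of the top factor by sandwiching between the cases covered by Lemmas~\ref{lem:Insensitivity1} and~\ref{lem:Insensitivity2}, and pass the limit inside the $dy$-integral by dominated convergence. The uniformity-in-$y$ issue you flag is resolved in the paper exactly as you suggest as a fallback (pointwise convergence plus a dominating bound suffices for DCT), and your explicit tracking of the $i=M$ constraint via the indicator $\mathbf{1}\{y\le\theta+M-1\}$ is a point the paper glosses over, which is harmless since the lemma is only invoked with $b=c_M$.
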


\begin{proof}
The density of the $M$'th order statistic of a sample of $N$ standard uniformly distributed random variables is given by a beta distribution~\cite{Embrechts1997}
\begin{align*}
f_{\Umn}(x)=\frac{N!}{(M-1)!(N-M)!}x^{M-1}(1-x)^{N-M}.
\end{align*}
Conditioning on the $M$'th order statistic yields
\begin{align*}
\Prob(\tau^N_{\theta,\Delta} \geq k) &= \int_{\frac{a}{N}}^{\frac{b}{N}} \Prob\left(\Uin \leq \frac{c_{i,N}}{N}, \;\;\; \forall i \leq k \big| \Umn =x \right) \cdot f_{\Umn}(x) \, dx \\
&= \int_a^{b} \Prob\left(\Uin \leq \frac{c_{i,N}}{N}, \;\;\; \forall i \leq k \big| \Umn =\frac{y}{N} \right)  \cdot \frac{f_{\Umn}\left(\frac{y}{N}\right)}{N} \, dy\\
&= \int_a^{b} \Prob\left(\Uimeen \leq \frac{c_{i,N}}{y}, \;\;\; \forall i \leq M-1 \right) \\
&\hspace{4cm} \cdot \Prob\left(\Uinm \leq \frac{\theta+M-y+i-1}{N(1-\frac{y}{N})}, \;\;\; \forall i \leq k-M \right) \cdot \frac{f_{\Umn}\left(\frac{y}{N}\right)}{N}  \, dy.
\end{align*}
The latter equality follows from the Markov property: Given that $\Umn =y/N$, the first $M-1$ order statistics are independent of the other order statistics and distributed as $M-1$ uniformly distributed random variables on the interval $[0,y/N]$. Similarly, the other order statistics are independent of the first $M$ order statistics, and have the same law as $N-M$ uniformly distributed random variables on the interval $[y/N,1]$. Rescaling the intervals results in the above expression.

\noindent
Next, we show that an interchange of limit and integration is justified by bounding all three terms within the integral form above. First, we observe that for all $y \in [a,b]$,
\begin{align*}
\frac{f_{\Umn}\left(\frac{y}{N}\right)}{N} &= \frac{(N-1)!}{(M-1)!(N-M)!}\left(\frac{y}{N}\right)^{M-1}\left(1-\frac{y}{N}\right)^{N-M} \\
&\leq \frac{N^{M-1}}{(M-1)!}\left(\frac{y}{N}\right)^{M-1} \leq \frac{b^{M-1}}{(M-1)!} < \infty.
\end{align*}
Second, we show that the second term multiplied with $\sqrt{kN / (N-k)}$ is also bounded. Let $M^{\star}= \lceil b \rceil$, and hence for all $y \in [a,b]$, $N-M^{\star} \leq N-y \leq N$,
\begin{align*}
\Prob\left(\Uinm \leq \frac{\theta+M-y+i-1}{N(1-\frac{y}{N})}, \;\;\; \forall i \leq k-M \right) &\geq \Prob\left(\Uinm \leq \frac{\theta+M-y+i-1}{N}, \;\;\; \forall i \leq k-M \right), \\
\Prob\left(\Uinm \leq \frac{\theta+M-y+i-1}{N(1-\frac{y}{N})}, \;\;\; \forall i \leq k-M \right) &\leq \Prob\left(\Uinm \leq \frac{\theta+M-y+i-1}{N-M^{\star}}, \;\;\; \forall  i \leq k-M \right).
\end{align*}
Applying Lemmas~\ref{lem:Insensitivity1} and~\ref{lem:Insensitivity2} and subsequently the sandwich theorem yields
\begin{align*}
 \lim_{N \rightarrow \infty} \sqrt{\frac{kN}{N-k}}\Prob\left(\Uinm \leq \frac{\theta+M-y+i-1}{N(1-\frac{y}{N})} \right) &=\liminf_{N \rightarrow \infty} \Prob\left(\Uinm \leq \frac{\theta+M-y+i-1}{N}, \; \forall i \leq k-M \right) \\
&=\limsup_{N \rightarrow \infty} \Prob\left(\Uinm \leq \frac{\theta+M-y+i-1}{N-M^{\star}}, \; \forall  i \leq k-M \right) \\
&= \frac{2(\theta+M-y)}{\sqrt{2 \pi}} .
\end{align*}

\noindent
We find that the second term multiplied with $\sqrt{kN / (N-k)}$ is indeed bounded, since every converging sequence is bounded. Finally, the first term is trivially bounded by one, and therefore the dominated convergence theorem justifies an interchange of limit and integration. Since $U_{(i)}^{M-1}$, $i=1,...,M-1$ have a density not depending on $N$, it holds that
\begin{align*}
\lim_{N \rightarrow \infty}&\Prob\left( \Uimeen \leq \frac{c_{i,N}}{y}, \;\;\; \forall  i \leq M-1 \right)  = \Prob\left( \Uimeen \leq \frac{c_{i}}{y}, \;\;\; \forall  i \leq M-1 \right),
\end{align*}
and moreover,
\begin{align*}
&\lim_{N \rightarrow \infty} \frac{1}{N} \, f_{\Umn}\left(\frac{y}{N}\right) = \frac{y^{M-1}}{(M-1)!}e^{-y}.
\end{align*}
We conclude that~\eqref{eq:InterchangeExpression} holds.
\end{proof}

In order to obtain a more quantitative handle on the integral expression in~\eqref{eq:InterchangeExpression}, we need to have a deeper understanding of the probability term within the integral. The next lemma expresses this probability by means of a recursive formula, and we refer the reader to the appendix for the proof.

\begin{lemma}
Let $M \in \mathbb{N}$ be fixed, and suppose $\Delta(i,N)=0$ for all $i \geq M$ and $N \geq N_0$ for some $N_0 \in \mathbb{N}$. Let $c_i$, $i \in \mathbb{N}$ be as in~\eqref{eq:DefinitionCAss} and for every $y \in \mathbb{R}_{\geq 0}$, define $\sigma_M(y)=0$ if $c_1>y$ and otherwise
\begin{align*}
\sigma_M(y) = \max\{ i \in \mathbb{N} : i \leq M, c_i < y \}.
\end{align*}
Then,
\begin{align}
\Prob &\left( U_{(i)}^M \leq \frac{c_i}{y}, \;\;\; \forall i \leq M \right)  = 1-\frac{M!}{y^M} \sum_{j=1}^{\sigma_M(y)} \beta_{j-1} \frac{(y-c_j)^{M-j+1}}{(M-j+1)!},
\end{align}
where $\beta_0=1$ and for all $i \geq 1$,
\begin{align}
\beta_i = \sum_{j=1}^i \frac{(-1)^{j+1}}{j!} \beta_{i-j} (c_{i-j+1})^j.
\label{eq:BetaDefinition}
\end{align}
\label{lem:QuantifyProb}
\end{lemma}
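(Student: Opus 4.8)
The plan is to read the statement as a purely deterministic identity: for a fixed nondecreasing sequence $(c_i)$ and fixed $y$, the quantity $\Prob(U_{(i)}^M \le c_i/y,\ \forall i \le M)$ is a finite number that I will compute as the volume of a bounded polytope. Using that $(U_{(1)}^M,\dots,U_{(M)}^M)$ has joint density $M!$ on the ordered simplex $\{0 \le u_1 \le \cdots \le u_M \le 1\}$ and applying the change of variables $v_i = y\,u_i$,
\begin{align*}
\Prob\Bigl(U_{(i)}^M \le \tfrac{c_i}{y},\ \forall i \le M\Bigr) = \frac{M!}{y^M}\,\mathrm{vol}\bigl\{0\le v_1 \le \cdots \le v_M \le y:\ v_i \le c_i\ \forall i \le M\bigr\}.
\end{align*}
Since the full ordered simplex $\{0 \le v_1 \le \cdots \le v_M \le y\}$ has volume $y^M/M!$, it suffices to show that its complementary (``bad'') region, on which $v_j > c_j$ for some $j$, has volume $\sum_{j=1}^{\sigma_M(y)}\beta_{j-1}(y-c_j)^{M-j+1}/(M-j+1)!$.

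First I would decompose the bad region according to $j := \min\{\ell : v_\ell > c_\ell\}$, which is well defined there, and the pieces for distinct $j$ are disjoint and exhaust the region. On the piece with first violation at $j$ the coordinates split: $0 \le v_1 \le \cdots \le v_{j-1}$ with $v_\ell \le c_\ell$ for $\ell < j$, and $c_j < v_j \le v_{j+1} \le \cdots \le v_M \le y$; the only remaining order relation, $v_{j-1}\le v_j$, is automatic since $v_{j-1} \le c_{j-1} \le c_j < v_j$, where monotonicity of $(c_i)$ is used. So this piece is a Cartesian product, of volume $W_{j-1}\cdot(y-c_j)^{M-j+1}/(M-j+1)!$, with $W_{j-1} := \mathrm{vol}\{0 \le v_1 \le \cdots \le v_{j-1}: v_\ell \le c_\ell\}$ and the second factor the volume of an ordered $(M-j+1)$-dimensional simplex on $[c_j,y]$; this factor vanishes unless $c_j < y$, and by monotonicity the indices with $c_j < y$ form an initial segment $\{1,\dots,\sigma_M(y)\}$ (the region being empty when $c_1 \ge y$). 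Summing over $j$ reproduces the claimed formula, but with $W_{j-1}$ in place of $\beta_{j-1}$.

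It then remains to identify $W_i = \beta_i$. Applying the same first-violation decomposition to $W_i$ itself --- now $v_i \le c_i$ is one of the constraints, so the ambient region is the ordered simplex $\{0 \le v_1 \le \cdots \le v_i \le c_i\}$ of volume $c_i^i/i!$ and a violation can occur only at some $\ell \le i-1$ --- yields the recursion $W_i = c_i^i/i! - \sum_{\ell=1}^{i-1} W_{\ell-1}(c_i-c_\ell)^{i-\ell+1}/(i-\ell+1)!$ together with $W_0 = 1$. I would then prove $W_i = \beta_i$ by induction on $i$: substituting $W_m = \beta_m$ for $m < i$, expanding each power $(c_i - c_\ell)^{i-\ell+1}$ by the binomial theorem, and regrouping the resulting double sum by powers of $c_i$, the expression should collapse to $\sum_{j=1}^i (-1)^{j+1}c_{i-j+1}^j\beta_{i-j}/j!$, which is precisely definition~\eqref{eq:BetaDefinition}.

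The main obstacle is this last algebraic reconciliation: the first-violation recursion is indexed by the \emph{location} of the first violation, whereas~\eqref{eq:BetaDefinition} is organized by the \emph{size} of a gap, so one has to expand the binomial powers and carefully track the cancellations among the mixed monomials $c_i^a c_\ell^b$. A route that avoids this bookkeeping is to start instead from the classical determinantal formula for the distribution of uniform order statistics below a nondecreasing boundary (Steck's formula) applied with boundary $b_i = c_i/y$, and expand the determinant along its last row, which produces the recursion for $\beta_i$ directly. Finally, I note that the hypothesis ``$\Delta(i,N)=0$ for $i \ge M$'' plays no role in proving this lemma, whose statement is a deterministic identity valid for any nondecreasing sequence $(c_i)$; it matters only in the way the lemma is later applied, guaranteeing that all of the perturbation is carried by the indices $i \le M-1$.
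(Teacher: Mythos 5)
Your argument is correct, and it takes a genuinely different route from the paper. The paper evaluates the iterated integral over the ordered region directly, integrating from the innermost variable outward: the constraints with $c_i/y\ge 1$ produce a factor $(y-v_{\sigma_M(y)})^{M-\sigma_M(y)}/(M-\sigma_M(y))!$, and the remaining nested integrals are peeled off one at a time using the integral representation $\beta_k=\int_{-c_1}^0\int_{-c_2}^{y_1}\cdots\int_{-c_k}^{y_{k-1}}dy_k\cdots dy_1$ (Lemma~\ref{lem:betaIdentity}), with the boundary case $\sigma_M(y)=M$ requiring the separate algebraic identity of Lemma~\ref{lem:BetaIdentity2}. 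You instead pass to the complement and decompose the ``bad'' set by the location of the first violated constraint; monotonicity of $(c_i)$ makes each piece an exact Cartesian product, so the volume factorizes as $W_{j-1}\cdot(y-c_j)^{M-j+1}/(M-j+1)!$. This is arguably cleaner: it treats all values of $\sigma_M(y)$, including $\sigma_M(y)=M$, uniformly, and it isolates the only genuinely combinatorial content of the lemma in the single identification $W_i=\beta_i$. The price is exactly that identification, which you leave at the level of ``should collapse'': the first-violation recursion $W_i=c_i^i/i!-\sum_{\ell=1}^{i-1}W_{\ell-1}(c_i-c_\ell)^{i-\ell+1}/(i-\ell+1)!$ is organized by the position of the violation, while~\eqref{eq:BetaDefinition} is organized differently, and reconciling them by binomial expansion is messier than you suggest because the lower $\beta$'s are themselves polynomials in $c_1,\dots,c_{i-1}$. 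You do not actually need either that computation or Steck's determinant: after the sign flip $y_\ell=-v_\ell$, your $W_i$ is precisely the iterated integral of the paper's Lemma~\ref{lem:betaIdentity}, whose short induction (peel the innermost integral, split $\int_{-c_i}^{y_{i-1}}dy_i=y_{i-1}+c_i$) produces the recursion~\eqref{eq:BetaDefinition} directly. Your closing remark is also right: the hypothesis $\Delta(i,N)=0$ for $i\ge M$ is not used in the identity itself, which holds for any nonnegative nondecreasing sequence $(c_i)$; only the monotonicity (guaranteed by condition~(B)) is needed for your Cartesian-product step.
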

 
The previous lemmas provide the building blocks to prove Theorem~\ref{thm:PerturbConstant}. As a first step, we consider a scenario with only finitely many perturbations, see Figure~\ref{fig:FluctuationIllustration}.

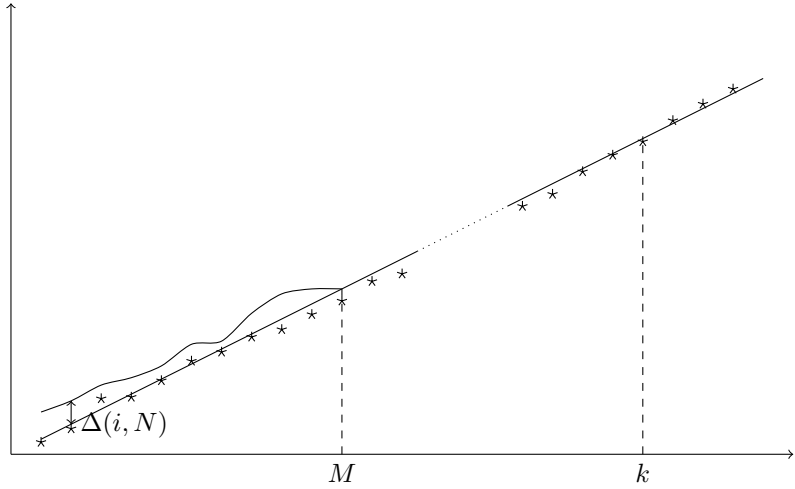
\begin{figure}[htb]
\centering
\begin{tikzpicture}[xscale=0.4, yscale=0.2]
\draw[->] (0,0) -- (0,30);
\draw[->] (0,0) -- (26,0);

\draw plot[smooth] file {fluct.txt};
\draw (1,1) -- (13.5,13.5);
\draw[dotted] (13.5,13.5) -- (16.5,16.5);
\draw (16.5,16.5) -- (25,25);
\draw [<->] (2,2) -- (2,3.551739230573947);
\draw node[right] at (2,2) {$\Delta(i,N)$};

\draw [dashed] (11,0) -- (11,11);
\draw node[below] at (11,0) {$M$};

\draw [dashed] (21,0) -- (21,21);
\draw node[below] at (21,0) {$k$};
\draw plot[only marks, mark=star, scale plot marks=false, mark size=2] coordinates {(1,0.8) (2,1.7) (3,3.7) (4,3.8) (5,4.9) (6,6.2) (7,6.8) (8,7.8) (9,8.3) (10,9.3) (11,10.2) (12,11.5) (13,12) (17,16.5) (18,17.3) (19,18.8) (20,19.9) (21,20.8) (22, 22.2) (23, 23.3) (24, 24.3)};
\end{tikzpicture}
\caption{Effect of perturbations for the truncated case.}
\label{fig:FluctuationIllustration}
\end{figure}

\begin{proposition}
Let $k:=k(N) \leq N$ be a positive function of $N$ such that $k \rightarrow \infty$ and $N-k \rightarrow \infty$ as $N \rightarrow \infty$. Let $F\circ l^N$ be as in~\eqref{eq:FinitePerturbLoadFunction} with $\Delta(i,N)=0$ for all $i \geq M$ and $N \geq N_0$ for some fixed $M \in \mathbb{N}$ and $N_0 \in \mathbb{N}$, and let $c_i, i \in \mathbb{N}$ be as in~\eqref{eq:DefinitionCAss}. Then, there exists a constant $V_M(\theta,\Delta) \in (0,\infty)$ such that
\begin{align*}
&\lim_{N \rightarrow \infty} \sqrt{\frac{kN}{N-k}}\Prob(\A \geq k) = V_M(\theta,\Delta).
\end{align*}
Let $\beta_i$, $ i \in \mathbb{N}$, be as in~\eqref{eq:BetaDefinition} and let $\gamma(\cdot,\cdot)$ denote the lower incomplete gamma distribution:
\begin{align*}
\gamma(s,x)=\int_0^x t^{s-1} e^{-t} \, dt.
\end{align*}
The value of $V_M(\theta,\Delta)$ can be expressed as
\begin{align*}
V_M(\theta,\Delta) &= \frac{2}{\sqrt{2 \pi}}\left( \frac{\theta }{(M-1)!}\gamma(M, c_M)+\frac{(c_M)^M}{(M-1)!} e^{-c_M} \right. \\
& \hspace{2cm}\left.+\sum_{j=1}^{M-1} \beta_{j-1} e^{-c_j} \frac{\Delta(j)}{(M-j)!} \gamma(M-j+1,c_M-c_j)   -\sum_{j=1}^{M-1} \beta_{j-1}  \frac{(c_M-c_j)^{M-j+1}}{(M-j)!} e^{-c_M} \right).
\end{align*}
\label{prop:TruncatedExceedanceProb}
\end{proposition}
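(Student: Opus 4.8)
The plan is to reduce $\Prob(\A\ge k)$ to a \emph{bounded} integral by conditioning on the $M$-th order statistic, then substitute the closed form of Lemma~\ref{lem:QuantifyProb} and evaluate the resulting elementary integral term by term; everything collapses via the incomplete-gamma recursion $\gamma(s+1,x)=s\gamma(s,x)-x^{s}e^{-x}$.

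First I would use that $\Delta(i,N)=0$ for $i\ge M$, $N\ge N_0$, so the $M$-th threshold is exactly $c_{M,N}=\theta+M-1=c_M$. Hence for all $N$ with $k\ge M$ the event $\{\A\ge k\}=\{\tau^N_{\theta,\Delta}\ge k\}$ forces $\Umn\le c_M/N$, giving $\Prob(\A\ge k)=\Prob(\tau^N_{\theta,\Delta}\ge k;\,\Umn\in[0,c_M/N])$. Applying Lemma~\ref{lem:ConditioningM} with $a=0$, $b=c_M$ then yields both existence of the limit and the representation
\begin{align*}
\lim_{N\to\infty}\sqrt{\tfrac{kN}{N-k}}\,\Prob(\A\ge k)&=\frac{2}{\sqrt{2\pi}}\int_{0}^{c_M}\Prob\Big(\Uimeen\le\tfrac{c_i}{y},\ \forall i\le M-1\Big)(\theta+M-y)\frac{y^{M-1}}{(M-1)!}e^{-y}\,dy\\
&=:\frac{2}{\sqrt{2\pi}}\,J,
\end{align*}
so the entire problem reduces to computing the constant $J$.

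Next I would invoke Lemma~\ref{lem:QuantifyProb} with $M$ replaced by $M-1$ (its identity uses only $c_1,\dots,c_{M-1}$ and $y$, so the truncation level is irrelevant), turning the probability inside the integral into $1-\frac{(M-1)!}{y^{M-1}}\sum_{j=1}^{\sigma_{M-1}(y)}\beta_{j-1}\frac{(y-c_j)^{M-j}}{(M-j)!}$. The factors $y^{M-1}/(M-1)!$ and $(M-1)!/y^{M-1}$ cancel, and $J=J_1-J_2$ with $J_1=\int_0^{c_M}(\theta+M-y)\frac{y^{M-1}}{(M-1)!}e^{-y}\,dy$ and $J_2=\int_0^{c_M}\sum_{j=1}^{\sigma_{M-1}(y)}\beta_{j-1}\frac{(y-c_j)^{M-j}}{(M-j)!}(\theta+M-y)e^{-y}\,dy$. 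For $J_1$, splitting $\theta+M-y$ and applying the $\gamma$-recursion gives $J_1=\frac{\theta}{(M-1)!}\gamma(M,c_M)+\frac{c_M^{M}}{(M-1)!}e^{-c_M}$, the first two terms of $V_M(\theta,\Delta)$. For $J_2$ I would interchange the finite sum with the integral and note that, since $(c_i)_i$ is non-decreasing by (B), the index $j\le M-1$ is active exactly when $c_j<y$; hence $J_2=\sum_{j=1}^{M-1}\beta_{j-1}\int_{c_j}^{c_M}\frac{(y-c_j)^{M-j}}{(M-j)!}(\theta+M-y)e^{-y}\,dy$. Substituting $u=y-c_j$ and using $\theta+M-c_j=(M-j+1)-\Delta(j)$, the $(M-j+1)$ and $-u$ contributions recombine (again by the $\gamma$-recursion) into $\frac{(c_M-c_j)^{M-j+1}}{(M-j)!}e^{-(c_M-c_j)}$, leaving $J_2=\sum_{j=1}^{M-1}\beta_{j-1}\frac{(c_M-c_j)^{M-j+1}}{(M-j)!}e^{-c_M}-\sum_{j=1}^{M-1}\beta_{j-1}e^{-c_j}\frac{\Delta(j)}{(M-j)!}\gamma(M-j+1,c_M-c_j)$. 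Then $V_M(\theta,\Delta)=\frac{2}{\sqrt{2\pi}}(J_1-J_2)$ is the stated formula, and I would sanity-check it against Theorem~\ref{thm:CumulativeProbGeneral} in the degenerate cases $M=1,2$ (where all $\Delta$ vanish and everything must collapse to $2\theta/\sqrt{2\pi}$). Finiteness of $V_M$ is immediate; for $V_M>0$ I would argue from the integral form of $J$, whose integrand is nonnegative on $[0,c_M]$ (since $\theta+M-y\ge 1$ there) and strictly positive on $(0,c_1)$, where the probability factor equals $1$.

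The conceptually substantive part — confining $\Umn$ to a bounded window and decoupling the first $M-1$ from the last $N-M$ order statistics, together with the dominated-convergence interchange — is already packaged inside Lemma~\ref{lem:ConditioningM}, so the remaining obstacle is essentially careful bookkeeping: tracking which terms of the $\sigma_{M-1}(y)$-sum are active over each subinterval of $[0,c_M]$ when swapping sum and integral, and pushing the incomplete-gamma recursions so that the spurious higher-order $\gamma(M+1,\cdot)$ and $\gamma(M-j+2,\cdot)$ terms cancel. The one non-computational nuance is the strict positivity of $V_M$, which ultimately hinges on $c_1=\lim_{N}NF(l^N(1))>0$, guaranteed by the assumed strictly positive surplus-capacity density at the origin.
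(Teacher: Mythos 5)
Your proposal is correct and follows essentially the same route as the paper's proof: condition on $U_{(M)}^N$ via Lemma~\ref{lem:ConditioningM} with $a=0$, $b=c_M$, substitute the closed form from Lemma~\ref{lem:QuantifyProb} (at level $M-1$), split the integral over the intervals $[c_m,c_{m+1}]$, and collapse everything with the incomplete-gamma recursion and the identity $\theta+M-c_j=M-j+1-\Delta(j)$. Your added sanity checks (positivity of $V_M$ and the degenerate cases) go slightly beyond what the paper records but are consistent with it.
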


\begin{proof}
Noting~\eqref{eq:OrderedProb} and~\eqref{eq:TauDefinition}, applying Lemma~\ref{lem:ConditioningM} with $a=0$ and $b=c_M$ and subsequently invoking Lemma~\ref{lem:QuantifyProb} yields
\begin{align*}
\lim_{N \rightarrow \infty} \sqrt{\frac{kN}{N-k}}\Prob(\A \geq k) &= \frac{2}{\sqrt{2 \pi}} \int_0^{c_M} \Prob\left(\Uimeen \leq \frac{c_{i}}{y}, \;\;\; \forall i \leq M-1 \right)(\theta+M-y) \frac{y^{M-1}}{(M-1)!}e^{-y} \, dy\\
&= \frac{2}{\sqrt{2 \pi}}\int_0^{c_M} (\theta+M-y)\frac{y^{M-1}}{(M-1)!} e^{-y} \, dy \\
&\hspace{3cm}-\frac{2}{\sqrt{2 \pi}} \int_0^{c_M}  \theta+M-y) \sum_{j=1}^{\sigma_{M-1}(y)}  \beta_{j-1} \frac{(y-c_j)^{M-j}}{(M-j)!}e^{-y} \, dy.
\end{align*}
The first term can also be expressed as
\begin{align*}
\int_0^{c_M}  (\theta+M-y)\frac{y^{M-1}}{(M-1)!} e^{-y} \, dy &= \frac{(\theta+M)\gamma(M,c_M)}{(M-1)!}  - \frac{M \gamma(M,c_M)}{(M-1)!} + \frac{(c_M)^M }{(M-1)!} e^{-c_M} \\
&= \frac{\theta }{(M-1)!}\gamma(M, c_M)+\frac{(c_M)^M}{(M-1)!} e^{-c_M} .
\end{align*}

\noindent
The second term yields 
\begin{align*}
\int_0^{c_M}  (\theta&+M-y) \sum_{j=1}^{\sigma_{M-1}(y)}  \beta_{j-1} \frac{(y-c_j)^{M-j}}{(M-j)!}e^{-y} \, dy =\sum_{m=1}^{M-1}  \sum_{j=1}^m \int_{c_m}^{c_{m+1}} (\theta+M-y)  \beta_{j-1} \frac{(y-c_j)^{M-j}}{(M-j)!}e^{-y} \, dy \\
&= \sum_{j=1}^{M-1} \int_{c_j}^{c_{M}} \beta_{j-1} (\theta+M-y) \frac{(y-c_j)^{M-j}}{(M-j)!}e^{-y} \, dy \\
&= \sum_{j=1}^{M-1} \beta_{j-1} e^{-c_j} \int_{0}^{c_{M}-c_j} \frac{(\theta+M-c_j-u)u^{M-j}}{(M-j)!}e^{-u} \, du \\
&= \sum_{j=1}^{M-1} \beta_{j-1} e^{-c_j} \left( \frac{\theta+M-c_j}{(M-j)!} \gamma(M-j+1,c_M-c_j) \right.\\
&\hspace{4.5cm} - \frac{M-j+1}{(M-j)!} \gamma(M-j+1,c_M-c_j) \left. + \frac{(c_M-c_j)^{M+j-1}}{(M-j)!} e^{-(c_M-c_j)}\right) \\
&= \sum_{j=1}^{M-1} \beta_{j-1} e^{-c_j} \frac{-\Delta(j)}{(M-j)!} \gamma(M-j+1,c_M-c_j) +\sum_{j=1}^{M-1} \beta_{j-1} \frac{(c_M-c_j)^{M+j-1}}{(M-j)!} e^{-c_M}.
\end{align*}
Subtracting the second term from the first concludes the proof.
\end{proof}

Next, we allow for all perturbations that satisfy conditions~(A)-(D). It turns out that it is more convenient to use an equivalent condition of~(C) and~(D): for every $\epsilon >0$ there exists a pair $(M_\epsilon,N_\epsilon) \in \mathbb{N} \times \mathbb{N}$ such that $|\Delta(i,N)|< \epsilon$ for all $N \geq N_\epsilon$ and all $M_\epsilon \leq i \leq k(N)$. We refer the reader to Lemma~\ref{lem:EquivalentCondition} in the Appendix for a formal proof that the conditions are equivalent.
\begin{figure}[htb]
\centering
\begin{tikzpicture}[xscale=3, yscale=6]
\draw[->,thick] (0,0) -- (0,1.2);
\draw[->, thick] (0,0) -- (4.2,0);

\draw[thick, color=brown] plot[smooth] file {ThetaLine.txt};
\draw plot[smooth] file {InfinitePerturb2.txt};

\draw[dotted] (0,0.25) -- (4,1.045);
\draw[dotted] (0,0.15) -- (4,0.955);

\draw[thick, dashed, color=red] plot[smooth] file {UBReal2.txt};
\draw[thick, dashed, color=red] (1.8,0.605) -- (4,1.045);
\draw[thick, dashed, color=blue] plot[smooth] file {UBReal2.txt};
\draw[thick, dashed, color=blue] (1.8,0.515) -- (1.8,0.605);
\draw[thick, dashed, color=blue] (1.8,0.515) -- (4,0.955);

\draw[thick, dashed] (1.8,0) -- (1.8,0.6);
\draw node[below] at (1.8,0) {$M_\epsilon$};
\draw[<->, thick] (3.9,0.98) -- (3.9,1.025);
\draw node[above] at (3.9,1.02) {$\epsilon$};

\draw[<->] (0.30,0.26) -- (0.30,0.17);
\draw node[below right] at (0.30,0.17) {$\Delta$};

\end{tikzpicture}
\caption{Illustration of infinitely many perturbations setting.}
\label{fig:InfiniteFluctuationIllustration}
\end{figure}
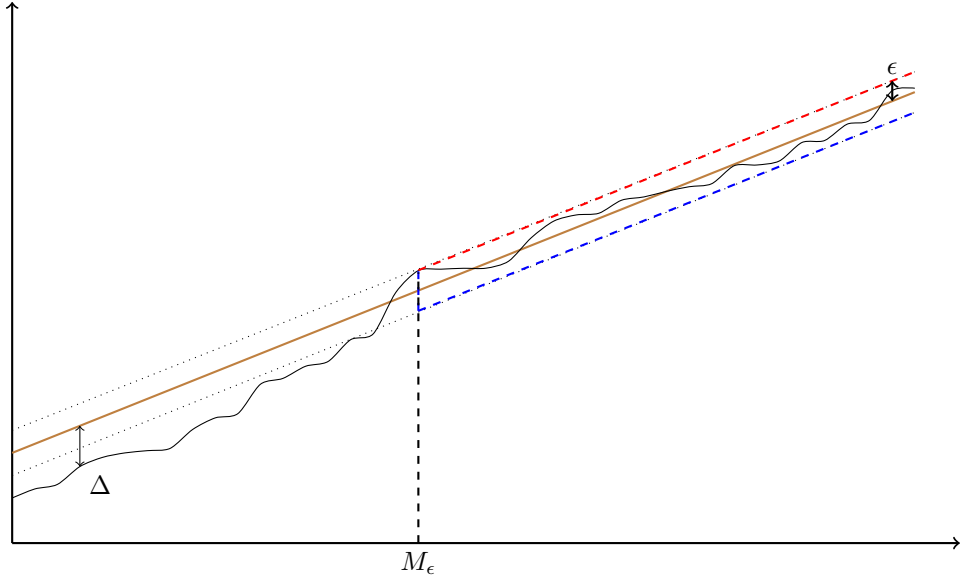

We show that the exceedance probability times $\sqrt{kN/(N-k)}$ still converge to a constant by considering the bounds illustrated by the dashed lines in Figure~\ref{fig:InfiniteFluctuationIllustration} for every fixed $\epsilon>0$. That is, for an upper bound, we consider the exceedance probability in case of an initial disturbance $(\theta+\epsilon)/N$ and allowing for the first $M_\epsilon -1$ perturbations. Indeed, this yields an upper bound for all $N \geq N_\epsilon$: the values are the same for all pairs $(i,N)$ with $i \leq M_\epsilon -1$, and for $i \geq M_\epsilon$, we have $\theta+i-1+\Delta(i,N) \leq \theta+\epsilon+i-1$ for all $N \geq N_\epsilon$. Similarly, for a lower bound we consider the case with initial disturbance $(\theta-\epsilon)/N$ where we allow for the first $M_\epsilon -1$ perturbations. By applying Proposition~\ref{prop:TruncatedExceedanceProb}, we can determine the asymptotic behavior of the bounds explicitly. We show that as $\epsilon \downarrow 0$, the upper and lower bound converges to the same constant $V(\theta,\Delta) \in (0,\infty)$ defined as
\begin{align}
V(\theta,\Delta) = \lim_{M \rightarrow \infty} V_M(\theta,\Delta).
\label{eq:VconstantDefinition}
\end{align}

\begin{proof}[Proof of Theorem~\ref{thm:PerturbConstant}]
By assumption and Lemma~\ref{lem:EquivalentCondition}, we know that $\forall \epsilon >0$ there exists a pair $(M_\epsilon ,N_\epsilon) \in \mathbb{N} \times \mathbb{N}$ such that $|\Delta(i,N)| < \epsilon$ for every $N \geq N_\epsilon$ and $M_\epsilon \leq i \leq k(N)$. Fix $\epsilon>0$, and define for all $(i,N) \in \mathbb{N} \times \mathbb{N}$,
\begin{align*}
\Delta_1(i,N) =  \left\{ \begin{array}{ll}
\Delta(i,N)-\epsilon & \textrm{if } i<M_\epsilon, \\
0 & \textrm{if } i\geq M_\epsilon,
\end{array} \right. 
\end{align*}
and
\begin{align*}
\Delta_2(i,N) =  \left\{ \begin{array}{ll}
\Delta(i,N)+\epsilon & \textrm{if } i<M_\epsilon, \\
0 & \textrm{if } i\geq M_\epsilon.
\end{array} \right.
\end{align*} 
Recall the definition of the stopping times defined in~\eqref{eq:TauDefinition} and particularly, $\tau^N_{\theta,\Delta}=A^N$. Observe that the case of the upper and lower bound described above thus correspond to stopping times $\tau^N_{\theta+\epsilon,\Delta_1}$ and $\tau^N_{\theta-\epsilon,\Delta_2}$ respectively. Applying Proposition~\ref{prop:TruncatedExceedanceProb} to these cases with $M=M_\epsilon$
yields
\begin{align*}
\lim_{N \rightarrow \infty} \sqrt{\frac{kN}{N-k}} \Prob(\tau^N_{\theta+\epsilon,\Delta_1} \geq k) = V_{M_\epsilon}(\theta+\epsilon,\Delta_1), \\
\lim_{N \rightarrow \infty} \sqrt{\frac{kN}{N-k}} \Prob(\tau^N_{\theta-\epsilon,\Delta_2} \geq k) = V_{M_\epsilon}(\theta-\epsilon,\Delta_2).
\end{align*}
Couple $\tau^N_{\theta+\epsilon,\Delta_1}$, $\tau^N_{\theta,\Delta}=A^N$ and $\tau^N_{\theta-\epsilon,\Delta_2}$. Then the inequalities $\tau^N_{\theta-\epsilon,\Delta_2} \leq A^N \leq \tau^N_{\theta+\epsilon,\Delta_1}$ hold, and hence we obtain
\begin{align*}
\lim_{N \rightarrow \infty} \sqrt{\frac{kN}{N-k}}\Prob&(A^N \geq k)\in \left[V_{M_\epsilon}(\theta-\epsilon,\Delta_2), V_{M_\epsilon}(\theta+\epsilon,\Delta_1)\right].
\end{align*} 

Next, we show that the limits of $V_{M_\epsilon}(\theta-\epsilon,\Delta_2)$ and $V_{M_\epsilon}(\theta+\epsilon,\Delta_1)$ coincide as $\epsilon \downarrow 0$, i.e.
\begin{align*}
&\lim_{\epsilon \downarrow 0} \left[V_{M_\epsilon}(\theta+\epsilon,\Delta_1) -  V_{M_\epsilon}(\theta-\epsilon,\Delta_2) \right] =0
\end{align*}
For this, we condition on the value of $U_{(M_\epsilon)}^N$:
\begin{align*}
V_{M_\epsilon}(\theta+\epsilon,\Delta_1) - V_{M_\epsilon}(\theta-\epsilon,\Delta_2) &=\lim_{N \rightarrow \infty} \sqrt{\frac{kN}{N-k}} \left(\Prob(\tau^N_{\theta+\epsilon,\Delta_1} \geq k) - \Prob(\tau^N_{\theta-\epsilon,\Delta_2} \geq k) \right) \\
&\leq
z_1(\epsilon)+z_2(\epsilon),
\end{align*}
where
\begin{align*}
z_1&(\epsilon) =  \lim_{N \rightarrow \infty} \sqrt{\frac{kN}{N-k}}  \left(\Prob \left(\tau^N_{\theta+\epsilon,\Delta_1} \geq k ; U_{(M_\epsilon)}^N \in I_1 \right) - \Prob\left(\tau^N_{\theta-\epsilon,\Delta_2} \geq k ; U_{(M_\epsilon)}^N \in I_1 \right) \right)
\end{align*}
and 
\begin{align*}
z_2(\epsilon) =  \lim_{N \rightarrow \infty} \sqrt{\frac{kN}{N-k}}  \Prob \left(\tau^N_{\theta+\epsilon,\Delta_1} \geq k ; U_{(M_\epsilon)}^N \in I_2 \right),
\end{align*}
with $I_1 = \left[ 0 , \frac{\theta-\epsilon+M_\epsilon-1}{N} \right]$ and $I_2=\left[ \frac{\theta-\epsilon+M_\epsilon-1}{N}, \frac{\theta+\epsilon+M_\epsilon-1}{N} \right]$.

Note that for all $i < M_\epsilon$ and $N \in \mathbb{N}$, $c_{i,N}$ are the same for $\tau^N_{\theta+\epsilon,\Delta_1}$ and $\tau^N_{\theta+\epsilon,\Delta_1}$ by definition of $\Delta_1$ and~$\Delta_2$. Applying Lemma~\ref{lem:ConditioningM} to $z_1(\epsilon)$, we obtain
\begin{align*}
z_1(\epsilon) &= \frac{2}{\sqrt{2 \pi}}\int_0^{\theta-\epsilon+M_\epsilon-1} \Prob\left(U_{(i)}^{M_\epsilon-1} \leq \frac{c_i}{y}, \;\;\;\forall i \leq M_\epsilon-1 \right) \\
&\hspace{4cm}\cdot \left( (\theta+\epsilon+M_\epsilon-y) \frac{y^{M_\epsilon-1}}{(M_\epsilon-1)!} e^{-y} -(\theta-\epsilon+M_\epsilon-y)\frac{y^{M_\epsilon-1}}{(M_\epsilon-1)!} e^{-y} \right) \, dy \\
&\leq \frac{4 \epsilon}{\sqrt{2\pi}}\int_0^{\theta+\epsilon+M_\epsilon-1} \frac{y^{M_\epsilon-1}}{(M_\epsilon-1)!} e^{-y} \, dy \leq \frac{4 \epsilon}{\sqrt{2\pi}}.
\end{align*}
For every fixed $M \in \mathbb{N}$, differentiating $y^M e^{-y}$ with respect to $y$ and determining its roots shows that this function has one maximum attained at $y=M$ and hence, $y^M e^{-y} \leq M^M e^{-M}$. Using Lemma~\ref{lem:ConditioningM}, the previous argument and Stirling's bound yields
\begin{align*}
z_2(\epsilon) &=  \frac{2}{\sqrt{2 \pi}}\int_{\theta-\epsilon+M_\epsilon-1}^{\theta+\epsilon+M_\epsilon-1} \Prob\left(U_{(i)}^{M-1} \leq \frac{c_i}{y}, \;\;\;\forall i \leq M-1 \right) (\theta+\epsilon+M_\epsilon-y) \frac{y^{M-1}}{(M-1)!} e^{-y} \, dy \\
&\leq \frac{2}{\sqrt{2 \pi}} \int_{\theta-\epsilon+M_\epsilon-1}^{\theta+\epsilon+M_\epsilon-1} (1+2\epsilon) \frac{(M-1)^{M-1}}{(M-1)!} e^{-(M-1)} \, dy \leq \frac{4 \epsilon(1+2\epsilon)}{\sqrt{2\pi}}.
\end{align*}
Consequently,
\begin{align*}
V_{M_\epsilon}(\theta+\epsilon,\Delta_1)- V_{M_\epsilon}&(\theta-\epsilon,\Delta_2) \leq \frac{8\epsilon(1+\epsilon)}{\sqrt{2 \pi}},
\end{align*}
and since the difference is positive, it must converge to zero as $\epsilon \downarrow 0$.

What remains to be shown is that the limit of $V_{M_\epsilon}(\theta+\epsilon,\Delta_1)$ exists, and thus also $V_{M_\epsilon}(\theta-\epsilon,\Delta_2)$, and is the same as $V(\theta,\Delta)$ defined in~\eqref{eq:VconstantDefinition}. The existence of the limit follows from the monotonicity of $V_{M_\epsilon}(\theta+\epsilon,\Delta_1)$. That is, $V_{M_\epsilon}(\theta+\epsilon,\Delta_1)$ is non-decreasing and bounded from below by a strictly positive constant, for example $V_{M_\epsilon}(\theta-\epsilon,\Delta_2)$ with $\epsilon=1$. Since every monotone bounded function in a complete metric space converges, it follows that the limit exists as $\epsilon \downarrow 0$. Moreover, since $V(\theta,\Delta) \in [V(\theta-\epsilon,\Delta_2),V(\theta+\epsilon,\Delta_1)]$ for every $\epsilon >0$, the value of the limit must in fact be $V(\theta,\Delta)$.
\end{proof}

Suppose that for a fixed $\epsilon>0$ we determined the pair $(M_\epsilon,N_\epsilon)$ such that $|\Delta(i,N)|< \epsilon$ for all $N \geq N_\epsilon$ and all $M_\epsilon \leq i \leq k(N)$. Since $V(\theta,\Delta)$ lies between $V(\theta-\epsilon,\Delta_2)$ and $V(\theta+\epsilon,\Delta_1)$, it follows from the proof of Theorem~\ref{thm:PerturbConstant} that 
\begin{align*}
\left|V(\theta,\Delta)-V_{M_\epsilon}(\theta,\Delta)\right| \leq \frac{8\epsilon(1+\epsilon)}{\sqrt{2 \pi}}.
\end{align*}
This observation gives rise to Algorithm~\ref{alg:1}.

\begin{center}
\begin{algorithm}
\KwIn{Target error $\delta>0$, constant $\theta$ and perturbations $\Delta(\cdot,\cdot)$.}
\KwOut{Approximation $V_{M_\epsilon}(\theta,\Delta)$ such that $\left|V(\theta,\Delta)-V_{M_\epsilon}(\theta,\Delta)\right|<\delta$.}
\begin{enumerate}
\item Determine $\epsilon >0$ such that $\frac{8\epsilon(1+\epsilon)}{\sqrt{2 \pi}} \leq \delta$.
\item Determine pair $(M_\epsilon,N_\epsilon)$ such that $|\Delta(i,N)|< \epsilon$ \\ for all $N \geq N_\epsilon$ and all $M_\epsilon \leq i \leq k(N)$.
\item Return $V_{M_\epsilon}(\theta,\Delta)$ defined in Proposition~\ref{prop:TruncatedExceedanceProb}.
\end{enumerate}
\caption{Approximation scheme for $V(\theta,\Delta)$.}
\label{alg:1}
\end{algorithm}
\end{center}

\section{Identifying when power-law behavior prevails}
\label{sec:IdentifyCriticalRegion}
When the surplus capacity distribution and/or load surge function are given, we would like to know which (growing) thresholds $k:=k(N)$, if any, yield power-law behavior for the exceedance probability. A sufficient condition is provided by Theorem~\ref{thm:PerturbConstant} and accordingly, we need to determine the thresholds $k$ such that (C) and (D) are satisfied. Such thresholds can be derived by exploiting the Taylor expansion
\begin{align}
F(\LoadHat) &= F'(0)\LoadHat + O((\LoadHat)^2).
\label{eq:TaylorApproximation}
\end{align}
We observe that Equation~\eqref{eq:TaylorApproximation} leads to an approximation of the composition that only requires information on the value~$F'(0)$ and the load surge function. That is, the only property of the surplus capacity distribution we need for checking whether power-law behavior prevails, is its behavior near its minimum. In particular, the average of the surplus capacity does not play any role.

We close this section by setting the threshold $k$ to a certain fixed integer, which allows us to analyze cases where the perturbations do not satisfy conditions~(C) and~(D). We suggest a method to explore the asymptotic behavior numerically for these cases.

\begin{example}\normalfont 
The main purpose of this example is to illustrate how we can use the Taylor expansion of the composition $F\circ l^N$ to determine growing thresholds $k$ where power-law behavior for the exceedance probability prevails. Suppose we have a surplus capacity distribution with density $F'(0)$ in zero, and let the load surge function be given by
\begin{align*}
l^N(i) = \frac{\theta+i-1}{N F'(0)}
\end{align*}
for some constant $\theta>0$. If the surplus capacity is uniformly distributed, we  have the setting of~\cite{Dobson2005} and all thresholds $k$ such that both $k \rightarrow \infty$ and $N-k \rightarrow \infty$ as $N\rightarrow \infty$ lead to power-law behavior for the exceedance probability. Otherwise, due to~\eqref{eq:TaylorApproximation}, the perturbations are given by
\begin{align*}
\Delta(i,N) &= O\left( \frac{i^2}{N} \right)
\end{align*}
for all $(i,N) \in \mathbb{N} \times \mathbb{N}$. Therefore, $\Delta(i) = \lim_{N \rightarrow \infty} \Delta(i,N) = 0$ for all $i \in \mathbb{N}$ and condition (C) is satisfied. For (D) to hold, we need $k=o(\sqrt{N})$, because for such thresholds $\Delta(i,N) \rightarrow 0$ for all $i \leq k$ as $N \rightarrow \infty$.
\end{example}

\begin{example}\normalfont 
\label{ex:Shortle}
Next, we verify and formalize the claims for the model in~\cite{Shortle2013} that we discussed in Section~\ref{sec:MainResults}. Recall that the load surge function is given by
\begin{align*}
l^N(i) = \frac{aN}{N-i} - a = \frac{a i}{N-i},
\end{align*} 
and suppose that $a = 1/F'(0)$. Then, applying the Taylor expansion~\eqref{eq:TaylorApproximation}, we obtain
\begin{align*}
\Delta(i,N) &=  O\left( \left(\frac{i}{N }\right)^2 \right) +O\left(N \left(\frac{i}{N-i}\right)^2\right)
\end{align*}
for all $(i,N) \in \mathbb{N} \times \mathbb{N}$. Again, we have the pointwise convergence $\Delta(i)=0$ for all $i \in \mathbb{N}$. In addition, we require that $k=o(\sqrt{N})$ for condition~(D) to hold for all $i \leq k$.

We emphasize that~\eqref{eq:TaylorApproximation} yields very rough bounds, and when more specific information is known about the surplus capacity distribution, more sophisticated bounds can lead to larger possible thresholds $k$. For instance, if $a=1$ and the surplus capacity is uniformly distributed in the previous example, then $\Delta(i,N)= O((i/N)^2)$ and power-law behavior for the exceedance probability occurs for all $k =o(N)$.
\end{example}

\begin{example}\normalfont 
In this example we suggest a numerical method for exploring the asymptotic  behavior of the exceedance probability for setting where the perturbations do not necessarily satisfy conditions~(C) and~(D). Intuitively, we find that if the value $\Delta(1,N)$ tends too close to its lower bound as $N \rightarrow \infty$, the system does not perceive an initial disturbance and no lines will fail. On the other hand, if $\Delta(k,N)$ becomes too large as $N \rightarrow \infty$, the system cannot deal with such a strong increase of load and the threshold $k$ will certainly be exceeded. If $\Delta(1,N)$ is not too small and $\Delta(k,N)$ is not too large as $N \rightarrow \infty$, we obtain a non-degenerate limit for the exceedance probability.

\begin{proposition}
Let $c_{i,N} := N \cdot F ( l^N(i))$ for $(i,N) \in \mathbb{N} \times \mathbb{N}$ and $c_{i} = \lim_{N \rightarrow \infty} c_{i,N}$ for $i \in \mathbb{N}$, which is a non-decreasing sequence. If $c_{1}>0$ and $c_{k} = O(1)$, then
\begin{equation}
\lim_{N \rightarrow \infty} \Prob(\A \geq k) = 1-\sum_{j=1}^k \beta_{j-1} e^{-j}
\label{eq:TheoremFailedLined}
\end{equation}
with $\beta_i$, $i \in \mathbb{N}$, are defined as in~\eqref{eq:BetaDefinition} in Section~\ref{sec:ProofsOfC}.
\label{prop:AssFailedLines}
\end{proposition}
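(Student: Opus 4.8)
The plan is to pass to a Poisson-process limit and then evaluate that limit in closed form using Lemma~\ref{lem:QuantifyProb}. By~\eqref{eq:OrderedProb} and the definition $c_{i,N}=N\,F(l^N(i))$ we have $\Prob(\A\ge k)=\Prob(\Uin\le c_{i,N}/N,\ i=1,\dots,k)$. Mimicking the representation used in the proof of Lemma~\ref{lem:Insensitivity2}, I would write $(\Uin)_{i\le N}\stackrel{d}{=}(S_i/S_{N+1})_{i\le N}$, where $S_i=\sum_{j=1}^{i}E_j$ with the $E_j$ i.i.d.\ unit-rate exponentials, so that
\[
\Prob(\A\ge k)=\Prob\Big(S_i\le \frac{c_{i,N}\,S_{N+1}}{N},\ i=1,\dots,k\Big).
\]
Since $k$ is a fixed integer, $c_{i,N}\to c_i$, and $S_{N+1}/N\to 1$ almost surely by the law of large numbers, the random thresholds $c_{i,N}\cdot(S_{N+1}/N)$ converge almost surely to the constants $c_i$; since $\Prob(S_i=c_i)=0$ for each $i$, bounded convergence then gives
\[
\lim_{N\to\infty}\Prob(\A\ge k)=\Prob\big(S_i\le c_i,\ i=1,\dots,k\big).
\]
The two hypotheses enter here transparently: $c_1>0$ keeps the right-hand side strictly positive, whereas $c_k=O(1)$---i.e.\ finiteness of $c_1\le\cdots\le c_k$---keeps it strictly below $1$.

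The second step evaluates $\Prob(S_i\le c_i,\ i\le k)$. Conditioning on $S_{k+1}$, which has density $y^{k}e^{-y}/k!$ on $(0,\infty)$, and using that given $S_{k+1}=y$ the epochs $S_1,\dots,S_k$ are distributed as the order statistics of $k$ uniforms on $(0,y)$, I would obtain
\[
\Prob\big(S_i\le c_i,\ i\le k\big)=\int_0^{\infty}\Prob\Big(U_{(i)}^{k}\le\frac{c_i}{y},\ i\le k\Big)\,\frac{y^{k}e^{-y}}{k!}\,dy.
\]
Lemma~\ref{lem:QuantifyProb} now applies with $M=k$ and the present non-decreasing non-negative sequence $(c_i)$ (its formula is an identity for any such sequence): the inner probability is $1$ for $y\le c_1$ and equals $1-\frac{k!}{y^{k}}\sum_{j=1}^{\sigma_k(y)}\beta_{j-1}\frac{(y-c_j)^{k-j+1}}{(k-j+1)!}$ for $y>c_1$. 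Substituting, the leading $1$ integrates to $1$; interchanging the $y$-integral with the ($y$-dependent) finite sum---term $j$ being present exactly when $y>c_j$, by monotonicity of $(c_i)$---and using $\int_{c_j}^{\infty}(y-c_j)^{k-j+1}e^{-y}\,dy=(k-j+1)!\,e^{-c_j}$, each surviving term collapses to $\beta_{j-1}e^{-c_j}$. This yields $1-\sum_{j=1}^{k}\beta_{j-1}e^{-c_j}$, the assertion~\eqref{eq:TheoremFailedLined} (with $c_j$ appearing in the exponent; e.g.\ for $k=1$ the limit is $1-e^{-c_1}$).

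The genuinely delicate points all sit in the first step: justifying the representation $(\Uin)_{i\le N}\stackrel{d}{=}(S_i/S_{N+1})_{i\le N}$ and the passage to the limit when the $c_{i,N}$ merely converge to the $c_i$ rather than being equal to them. With $k$ fixed, both are routine---the first is classical, and the second is just bounded convergence once one notes $S_{N+1}/N\to 1$ a.s.\ and $\Prob(S_i=c_i)=0$. If one wanted to allow $k=k(N)\to\infty$ subject to $c_k=O(1)$, the same scheme would still work, but the interchange of limit and integration would then require a dominated-convergence estimate in the spirit of the proof of Lemma~\ref{lem:ConditioningM} (dominating the rescaled order-statistic density and the inner probability uniformly over the bounded range of $y$ that $c_k=O(1)$ provides); securing that uniformity is where the real work would lie.
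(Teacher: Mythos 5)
Your proof is correct, but it takes a genuinely different route from the paper's. The paper obtains the limit law of the rescaled lowest $k$ order statistics via extreme value theory: it notes that the uniform distribution lies in the maximum domain of attraction of the Weibull law, invokes Theorem~4.2.8 of Embrechts et al.\ to get joint convergence of $\bigl(N(U^N_{(N-i+1)}-1)\bigr)_{i\le k}$, evaluates the limiting probability as an iterated integral over the ordered region with density $e^{y_k}$ by induction using Lemma~\ref{lem:betaIdentity}, and handles $c_{i,N}\neq c_i$ by sandwiching the integration region between $\epsilon$-perturbed regions. You instead use the R\'enyi representation $(U^N_{(i)})_{i\le N}\overset{d}{=}(S_i/S_{N+1})_{i\le N}$ (the same device the paper deploys, in conditional form, in the proof of Lemma~\ref{lem:Insensitivity2}), dispose of the convergence $c_{i,N}\to c_i$ in one stroke via the strong law and bounded convergence (using only that the Gamma laws of the $S_i$ are atomless), and then evaluate $\Prob(S_i\le c_i,\ i\le k)$ by conditioning on $S_{k+1}$ and invoking Lemma~\ref{lem:QuantifyProb}, which—as you correctly observe—is a pure order-statistics identity valid for any non-negative non-decreasing sequence; the Gamma-integral $\int_{c_j}^{\infty}(y-c_j)^{k-j+1}e^{-y}\,dy=(k-j+1)!\,e^{-c_j}$ then collapses each term. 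Your route is more elementary (no EVT citation needed) and its treatment of the perturbation $c_{i,N}\to c_i$ is cleaner than the paper's $\epsilon$-area estimate, while the paper's route gets by with the simpler Lemma~\ref{lem:betaIdentity} rather than the full strength of Lemma~\ref{lem:QuantifyProb}. Both arguments yield $1-\sum_{j=1}^{k}\beta_{j-1}e^{-c_j}$; the exponent $e^{-j}$ in the proposition as printed is evidently a typo for $e^{-c_j}$ (the paper's own base case $1-e^{-c_1}$ and its induction step confirm this), so your explicit remark on that point is well taken.
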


This result can be proven by applying results from extreme value theory, see in the Appendix for a formal proof. Proposition~\ref{prop:AssFailedLines} thus provides a method to determine the asymptotic exceedance probability for every fixed~$k$. For scenarios that do not satisfy the criteria we assumed in this paper, one can solve for the asymptotic exceedance probability numerically for large values of $k$ and explore its behavior in other regimes as well.
\end{example}

\section{Summary and outlook}
\label{sec:Outlook}
The model of Dobson et al.~\cite{Dobson2005} shows power-law dependence for the exceedance probability when the system is critically loaded. In this paper, we identify settings where the power law prevails by extending the setting of~\cite{Dobson2005} in two directions. First, we show that the threshold can grow with the network size. Second, we consider broader load surge functions and surplus capacity distributions. We show that the power-law distribution prevails when the composition of the surplus capacity distribution function and the load surge function ultimately tends to a linearly increasing function with critical slope.

However, for general load surge functions and surplus capacity distributions the power-law behavior will not continue to hold for all network size dependent thresholds. It would be of interest to determine all settings where the exceedance probability follows a power-law distribution, and to identify the behavior of the blackout size beyond these settings. We intend to pursue this in future research.

\begin{acknowledgement} \normalfont
This work is financially supported by the Netherlands Organization for Scientific Research (NWO) through the Gravitation NETWORKS grant 024.002.003, and by an NWO VICI grant.
\end{acknowledgement}


%

\bibliographystyle{plain}
\bibliography{bibSummary}

\newpage
\section*{Appendix}
\label{sec:App}
\begin{lemma}
For the $\beta_k$, $k\geq1$ defined as in~\eqref{eq:BetaDefinition},
\begin{align*}
\beta_k = \int_{-c_{1}}^0 \int_{-c_{2}}^{y_1} \cdots \int_{-c_{k}}^{y_{k-1}} dy_k \cdots dy_1.
\end{align*}
\label{lem:betaIdentity}
\end{lemma}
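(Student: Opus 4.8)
## Proof plan

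The plan is to show that the iterated integral on the right-hand side, which I denote
\[
I_k := \int_{-c_{1}}^0 \int_{-c_{2}}^{y_1} \cdots \int_{-c_{k}}^{y_{k-1}} dy_k \cdots dy_1 \qquad (I_0 := 1),
\]
satisfies \emph{exactly the same} recursion as the sequence $\beta_k$, namely $I_k = \sum_{j=1}^k \frac{(-1)^{j+1}}{j!}\, c_{k-j+1}^{\,j}\, I_{k-j}$ for $k \ge 1$, with the same seed $I_0 = \beta_0 = 1$. Here $I_{k-j}$ denotes the iterated integral built from the first $k-j$ constants $c_1,\dots,c_{k-j}$, which matches the index convention in~\eqref{eq:BetaDefinition}. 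Once the recursion is in hand, a one-line induction on $k$ (replace each $I_{k-j}$ by $\beta_{k-j}$) gives $I_k = \beta_k$, which is the assertion.

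To extract the recursion I would integrate out the \emph{innermost} variables one at a time --- it is this choice (rather than peeling the outermost integral) that generates precisely the coefficients $\frac{(-1)^{j+1}}{j!}c_{k-j+1}^{\,j}$. Concretely I would prove by induction on $m = 0,1,\dots,k-1$ that
\[
I_k = \sum_{j=1}^{m}\frac{(-1)^{j+1}}{j!}\, c_{k-j+1}^{\,j}\, I_{k-j} \;+\; \int_{-c_{1}}^0 \cdots \int_{-c_{k-m}}^{y_{k-m-1}} \frac{y_{k-m}^{\,m}}{m!}\, dy_{k-m}\cdots dy_1 ,
\]
the case $m=0$ being the definition of $I_k$. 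For the step $m \to m+1$, evaluate the innermost remaining integral, $\int_{-c_{k-m}}^{y_{k-m-1}} \frac{y_{k-m}^{\,m}}{m!}\, dy_{k-m} = \frac{y_{k-m-1}^{\,m+1}}{(m+1)!} - \frac{(-c_{k-m})^{m+1}}{(m+1)!}$. The first summand simply feeds the remaining $k-m-1$ integrals with the monomial at level $m+1$, while the constant $-\frac{(-c_{k-m})^{m+1}}{(m+1)!} = \frac{(-1)^{m+2}}{(m+1)!}\, c_{k-(m+1)+1}^{\,m+1}$ pulls out of those $k-m-1$ integrals to produce exactly the new term $\frac{(-1)^{(m+1)+1}}{(m+1)!}\, c_{k-(m+1)+1}^{\,m+1}\, I_{k-(m+1)}$. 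Taking $m = k-1$ and computing the last single integral $\int_{-c_{1}}^0 \frac{y_1^{\,k-1}}{(k-1)!}\, dy_1 = \frac{(-1)^{k+1}}{k!}\, c_{1}^{\,k} = \frac{(-1)^{k+1}}{k!}\, c_{1}^{\,k}\, I_0$ supplies the missing $j=k$ term and completes the recursion.

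I do not expect a genuine obstacle: the entire argument is a formal manipulation of antiderivatives, so no positivity or monotonicity of the $c_i$ is needed, and the only care required is the bookkeeping of signs and indices --- in particular identifying $-(-c_{k-m})^{m+1}$ with the $j = m+1$ coefficient $(-1)^{(m+1)+1} c_{k-(m+1)+1}^{\,m+1}$, and checking the degenerate cases $m = k-1$ (a single integral remains) and $k=1$ (where the recursion reads $I_1 = c_1 I_0 = c_1$, which is immediate from $\int_{-c_1}^0 dy_1$). If one prefers an explicit form, the same computation can be packaged as the polynomial identity $\int_{-c_1}^{t}\cdots\int_{-c_k}^{y_{k-1}} dy_k\cdots dy_1 = \sum_{j=0}^k \beta_j^{(k-j)}\, t^{k-j}/(k-j)!$, where $\beta^{(s)}$ is the $\beta$-sequence of the shifted constants $c_{s+1},c_{s+2},\dots$; evaluating at $t=0$ again gives $\beta_k$, but this route requires the recursion for the shifted sequences and is less clean, so I would present the innermost-peeling argument above.
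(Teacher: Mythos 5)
Your proposal is correct and follows essentially the same route as the paper: both proofs peel the innermost integral repeatedly, pulling out the constant $\frac{(-1)^{j+1}}{j!}c_{k-j+1}^{\,j}$ times the shorter iterated integral at each stage and concluding by induction that the integral satisfies the defining recursion~\eqref{eq:BetaDefinition} of $\beta_k$. The only cosmetic difference is that you first state the recursion for the integrals $I_k$ and then match it to that of $\beta_k$, whereas the paper substitutes $\beta_{k-j}$ via the induction hypothesis directly at each peeling step.
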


\noindent
\begin{proof}
The proof is by induction. For $k=1$ we indeed have $\int_{-c_{1}}^0 dy_1=c_{1}=\beta_1$. Suppose the lemma holds for all integers strictly smaller than $k$. Then,
\begin{align*}
\int_{-c_{1}}^0 \int_{-c_{2}}^{y_1} \cdots \int_{-c_{k}}^{y_{k-1}} \; dy_k \cdots dy_1 &= \int_{-c_{1}}^0 \int_{-c_{2}}^{y_1} \cdots \int_{-c_{k-1}}^{y_{k-2}} y_{k-1} \; dy_{k-1} \cdots dy_1 +c_{k} \beta_{k-1}  \\
&= \int_{-c_{1}}^0 \int_{-c_{2}}^{y_1} \cdots \int_{-c_{k-2}}^{y_{k-3}} \frac{(y_{k-2})^2}{2} \; dy_{k-2} \cdots dy_1 - \frac{(c_{k-1})^2}{2}\beta_{k-2} +c_{k} \beta_{k-1}  \\
&= \int_{-c_{1}}^0 \frac{1}{(k-1)!} y_1^{k-1} \; dy_1 + \sum_{j=1}^{k-1} \frac{(-1)^{j+1}}{j!} \beta_{k-j}(c_{k-j+1})^j \\
&= \sum_{j=1}^{k} \frac{(-1)^{j+1}}{j!} \beta_{k-j}(c_{k-j+1})^j=\beta_k.
\end{align*}
\end{proof}

\begin{lemma}
Let $(c_i)_{i \in \mathbb{N}}$ be a non-negative, non-decreasing sequence and $x \geq c_k$. For every $k \in \mathbb{N}$, it holds that
\begin{align*}
\beta_k + \sum_{j=1}^k \beta_{j-1} \sum_{l=0}^{k+1-j} \frac{(x-c_j)^l}{l!} = \sum_{l=0}^k \frac{x^k}{k!}.
\end{align*}
\label{lem:BetaIdentity2}
\end{lemma}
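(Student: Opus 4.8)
The plan is to prove the slightly stronger statement that the identity holds as an identity of polynomials in $x\in\mathbb{R}$ (the hypothesis $x\ge c_k$ is not actually needed), with the right-hand side read as $\sum_{l=0}^{k}x^l/l!$. Write $\Phi_k(x)$ for the left-hand side. The approach is to split $\Phi_k$ into a telescoping sum of simpler polynomials, each of which is pinned down by one differentiation together with one evaluation at the origin; the latter is exactly the defining recursion~\eqref{eq:BetaDefinition}.

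Concretely, for $m\ge 0$ I would introduce the auxiliary polynomial $L_m(x)=\beta_m+\sum_{j=1}^{m}\beta_{j-1}\,(x-c_j)^{m+1-j}/(m+1-j)!$, so that $L_0\equiv 1$. First I would verify the bookkeeping identity $\Phi_k=\sum_{m=0}^{k}L_m$. Expanding the inner sum of $\Phi_k$ by pulling out its $l=0$ term produces the constant $\beta_k+\sum_{j=1}^{k}\beta_{j-1}=\sum_{m=0}^{k}\beta_m$ plus $\sum_{j=1}^{k}\beta_{j-1}\sum_{l=1}^{k+1-j}(x-c_j)^l/l!$; swapping the order of summation in $\sum_{m=0}^{k}L_m$ (with $l=m+1-j$) yields exactly the same expression. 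The only identity used here, $\sum_{j=1}^{k}\beta_{j-1}=\sum_{m=0}^{k-1}\beta_m$, is immediate.

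Next I would show, by induction on $m$, that $L_m(x)=x^m/m!$ for all $x$. Differentiating term by term and peeling off the $j=m$ summand (which contributes precisely the constant $\beta_{m-1}$) gives $L_m'(x)=L_{m-1}(x)$; since $(x^m/m!)'=x^{m-1}/(m-1)!$, the induction hypothesis makes $L_m(x)-x^m/m!$ a constant, so it remains only to evaluate at $x=0$. There, re-indexing via $i=m+1-j$ turns $L_m(0)-\beta_m=\sum_{j=1}^{m}\beta_{j-1}(-c_j)^{m+1-j}/(m+1-j)!$ into $-\sum_{i=1}^{m}\frac{(-1)^{i+1}}{i!}\beta_{m-i}(c_{m-i+1})^i$, which equals $-\beta_m$ by~\eqref{eq:BetaDefinition}; hence $L_m(0)=0=0^m/m!$ for $m\ge1$ (and $L_0\equiv1$ is the base case). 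Summing, $\Phi_k(x)=\sum_{m=0}^{k}L_m(x)=\sum_{m=0}^{k}x^m/m!$, which is the claim.

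I do not expect a genuine obstacle here: the only substantive input is that~\eqref{eq:BetaDefinition}, suitably re-indexed, is precisely the statement that $L_m$ vanishes at the origin, and everything else is elementary calculus and rearrangement of finite sums. The points requiring mild care are the index shifts when swapping the order of summation in $\sum_m L_m$ and when isolating the boundary ($j=m$, resp. $l=0$) terms; I would double-check the smallest case $k=1$, where the identity reads $c_1+1+(x-c_1)=1+x$, as a sanity check. As an alternative one could verify the identity directly by induction on $k$ using $\Phi_k'=\Phi_{k-1}$ and $\Phi_k(0)=1$, but the telescoping presentation keeps the role of~\eqref{eq:BetaDefinition} most transparent.
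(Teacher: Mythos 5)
Your proof is correct, and it takes a genuinely different route from the paper's. The paper proves the identity by brute-force rearrangement: it expands each $(x-c_j)^l$ with the binomial formula, reorganizes the resulting triple sum, and twice collapses a triangular double sum along diagonals (the substitution $u=l+j-1$, $v=l$) using the recursion~\eqref{eq:BetaDefinition} in the form $\sum_{v=1}^{u}\beta_{u-v}(-c_{u-v+1})^v/v!=-\beta_u$. Your telescoping decomposition $\Phi_k=\sum_{m=0}^k L_m$ with $L_m'=L_{m-1}$ and $L_m(0)=0$ isolates exactly the same use of the recursion (vanishing of $L_m$ at the origin) but replaces all the index gymnastics with one differentiation and one evaluation, which is cleaner and makes the structure of the identity transparent. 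Your approach also buys a mild strengthening for free: the identity is a polynomial identity in $x$, so the hypothesis $x\geq c_k$ (and non-negativity/monotonicity of the $c_i$) is not needed; and you correctly read the right-hand side as $\sum_{l=0}^k x^l/l!$, fixing an evident typo in the statement. All the index shifts you flag as needing care ($l=m+1-j$ in the double-sum swap, $i=m+1-j$ at $x=0$, the boundary term $j=m$ under differentiation) check out, as does the base case $k=1$.
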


\begin{proof}
Particularly, we note that the identity is true for $k=0$. For $k \geq 1$, we note that due to the binomial formula, we have
\begin{align*}
\sum_{j=1}^k  \beta_{j-1} \sum_{l=0}^{k+1-j} \frac{(x-c_j)^l}{l!} &= \sum_{j=1}^k \sum_{l=0}^{k+1-j} \sum_{m=0}^{l} \beta_{j-1} \binom{l}{m} \frac{x^m (-c_j)^{l-m}}{l!} = \sum_{j=1}^k \sum_{m=0}^{k+1-j} \sum_{l=m}^{k+1-j} \beta_{j-1} \frac{x^m (-c_j)^{l-m}}{m! (l-m)!} \\
&= \sum_{j=1}^{k}  \sum_{l=0}^{k+1-j} \beta_{j-1} \frac{ (-c_j)^{l}}{l!} + \sum_{m=1}^{k} \frac{x^m}{m!}\sum_{j=1}^{k+1-m}  \sum_{l=m}^{k+1-j} \beta_{j-1} \frac{(-c_j)^{l-m}}{(l-m)!} \\
&= \sum_{j=1}^{k}  \beta_{j-1} +  \sum_{j=1}^{k}  \sum_{l=1}^{k+1-j} \beta_{j-1} \frac{ (-c_j)^{l}}{l!} +  \sum_{m=1}^{k} \frac{x^m}{m!}\sum_{j=1}^{k+1-m}  \beta_{j-1} \\
& \hspace{6cm} + \sum_{m=1}^{k} \frac{x^m}{m!}\sum_{j=1}^{k-m}  \sum_{l=1}^{k+1-m-j} \beta_{j-1} \frac{(-c_j)^{l}}{l!}.
\end{align*}
In the third term and the final term, we observe a double summation for all pairs of integers in a triangle. We apply the variable substitution $u=l+j-1$ and $v=l$ to sum over all pairs in the triangle via the diagonal lines. For the third term this yields
\begin{align*}
\sum_{j=1}^{k}  \sum_{l=1}^{k+1-j} \beta_{j-1} \frac{ (-c_j)^{l}}{l!} &= \sum_{u=1}^{k}  \sum_{v=1}^{u} \beta_{u-v} \frac{ (-c_{u-v+1})^{v}}{v!} = - \sum_{u=1}^{k} \beta_u.
\end{align*}
Applying the same argument to the fifth term yields
\begin{align*}
\sum_{j=1}^k \beta_{j-1} \sum_{l=0}^{k+1-j} \frac{(x-c_j)^l}{l!} &= \sum_{j=1}^{k}  \beta_{j-1}  - \sum_{u=1}^{k} \beta_u +  \sum_{m=1}^{k} \frac{x^m}{m!}  \left(  \sum_{j=1}^{k+1-m}  \beta_{j-1} - \sum_{u=1}^{k-m} \beta_u \right) \\
&= \beta_0-\beta_k + \beta_0 \sum_{m=1}^{k} \frac{x^m}{m!} = -\beta_k + \sum_{m=0}^{k} \frac{x^m}{m!},
\end{align*}
which proves the identity.
\end{proof}

\begin{proof}[Proof of Lemma~\ref{lem:QuantifyProb}]
First of all, note that if $\sigma_M(y)=0$, then $y \leq c_i$ for all $i=1,...,M$ and the probability equals one, and hence the identity holds. 

To show the result for $\sigma_M(y) \in (0,M)$, we need the joint density of $M$ order statistics, given by the constant $M!$~\cite[p.185]{Embrechts1997}, yielding
\begin{align*}
\Prob\left( U_{(i)}^M \leq \frac{c_i}{y}, \;\;\; \forall i \leq M \right) &= \int_0^{c_1/y} \int_{u_1}^{c_2/y} \cdots \int_{u_{\sigma_M(y)}-1}^{c_{\sigma_M(y)}/y} \int_{u_{\sigma_M(y)}}^{1} \cdots \int_{u_{M-1}}^{1}  M! \ du_{M} \cdots du_2 du_1 \\
&=  \frac{M!}{y^M} \int_0^{c_1} \int_{v_1}^{c_2} \cdots \int_{v_{\sigma_M(y)-1}}^{c_{\sigma_M(y)}} \int^{y}_{v_{\sigma_M(y)}} \cdots \int^{y}_{v_{M-1}}  1 \, dv_{M} \cdots dv_2 dv_1 \\
&=  \frac{M!}{y^M} \int_0^{c_1} \int_{v_1}^{c_2} \cdots \int_{v_{\sigma_M(y)-1}}^{c_{\sigma_M(y)}} \frac{(y-v_{\sigma_M(y)})^{M-\sigma_M(y)}}{(M-\sigma_M(y))!} \, dv_{\sigma_M(y)} \cdots dv_2 dv_1 \\
&= - \frac{M!}{y^M}   \beta_{\sigma_M(y)-1} \frac{(y-c_{\sigma_M(y)})^{M-\sigma_M(y)+1}}{(M-\sigma_M(y)+1)!}  \\
& \hspace{1.3cm}+\frac{M!}{y^M} \int_0^{c_1} \int_{v_1}^{c_2} \cdots \int_{v_{\sigma_M(y)-2}}^{c_{\sigma_M(y)-1}} \frac{(y-v_{\sigma_M(y)})^{M-\sigma_M(y)+1}}{(M-\sigma_M(y)+1)!} \,   dv_{\sigma_M(y)-1} \cdots dv_2 dv_1 \\
&= - \frac{M!}{y^M} \sum_{j=2}^{\sigma_M(y)} \beta_{j-1} \frac{(y-c_{j})^{M-j+1}}{(M-j+1)!}  +\frac{M!}{y^M} \int_0^{c_1} \frac{(y-v_1)^{M-1}}{(M-1)!} dv_1 \\
&= 1-\frac{M!}{y^M} \sum_{j=1}^{\sigma_M(y)} \beta_{j-1} \frac{(y-c_j)^{M-j+1}}{(M-j+1)!},
\end{align*}
where we used the change of variable $u_i=v_i/y$ for $i=1,...,M$ and then applied Lemma~\ref{lem:betaIdentity} in the Appendix multiple times.

For $\sigma_M(y)=M$, we observe that $y > c_i$ for all $i=1,...,M$ and which requires a separate analysis. Then,
\begin{align*}
\frac{y^M}{M!}\Prob\left( U_{(i)}^M \leq \frac{c_i}{y}, \;\;\; \forall i \leq M \right) &=  \int_0^{c_1} \int_{v_1}^{c_2} \cdots \int_{v_{M-1}}^{c_{M}} \, dv_{M} \cdots dv_2 dv_1 =  \beta_{M} \\
&= \sum_{j=0}^M \frac{y^j}{j!}-\sum_{j=1}^M \beta_{j-1} \sum_{l=0}^{M+1-j} \frac{(y-c_j)^l}{l!} \\
&= \frac{y^M}{M!} -\sum_{j=1}^M \beta_{j-1} \frac{(y-c_j)^{M+1-j}}{(M+1-j)!} + \sum_{j=0}^{M-1} \frac{y^j}{j!}-\sum_{j=1}^{M-1} \beta_{j-1} \sum_{l=0}^{M-j} \frac{(y-c_j)^l}{l!} -\beta_{M-1} \\
&= \frac{y^M}{M!} -\sum_{j=1}^M \beta_{j-1} \frac{(y-c_j)^{M+1-j}}{(M+1-j)!},
\end{align*}
where we applied Lemma~\ref{lem:BetaIdentity2} in the Appendix twice.
\end{proof}

\begin{lemma}
Conditions~(C) and (D) for perturbations~$\Delta(\cdot,\cdot)$ defined in Section~\ref{sec:ProofsOfC} are equivalent to the following condition: For every $\epsilon >0$ there exists a pair $(M_\epsilon,N_\epsilon) \in \mathbb{N} \times \mathbb{N}$ such that $|\Delta(i,N)|< \epsilon$ for all $N \geq N_\epsilon$ and all $M_\epsilon \leq i \leq k(N)$.
\label{lem:EquivalentCondition}
\end{lemma}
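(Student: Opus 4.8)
The plan is to prove equivalence in both directions, working from the definitions of (C) and (D) as stated. The new condition---call it ($\star$)---asserts uniform smallness of $\Delta(i,N)$ over a \emph{joint} range of $i$ and $N$; conditions (C) and (D) together assert two \emph{separate} limiting statements. First I would verify that ($\star$) implies both (C) and (D). For (C): fix $i \in \mathbb{N}$; we want $\lim_{N \to \infty} \Delta(i,N) = \Delta(i)$ with $\Delta(i) \to 0$. Given $\epsilon > 0$, ($\star$) supplies $(M_\epsilon, N_\epsilon)$; for $i \geq M_\epsilon$ and $N \geq N_\epsilon$ we have $|\Delta(i,N)| < \epsilon$ directly (using that $i \leq k(N)$ holds eventually, since $k(N) \to \infty$). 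This already shows $\limsup_{N \to \infty} |\Delta(i,N)| \leq \epsilon$ for every $i \geq M_\epsilon$, hence for such $i$ the limit $\Delta(i)$ exists along any subsequential limit and is bounded by $\epsilon$ in absolute value; letting $\epsilon \downarrow 0$ along the corresponding $M_\epsilon \to \infty$ then forces $\Delta(i) \to 0$ as $i \to \infty$. A small wrinkle: ($\star$) alone does not obviously give existence of $\lim_N \Delta(i,N)$ for the \emph{small} values of $i$ (those below every $M_\epsilon$); I expect this to be the main subtlety, and I would handle it by noting there are only finitely many such $i$ for each $\epsilon$ and that the definition of (C) is what we are trying to characterize---so in the forward direction I should assume ($\star$) includes enough to pin these down, or more cleanly, observe that for the equivalence we may take (C) to be \emph{defined} via these limits and check consistency. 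For (D): given $i(N) \leq k(N)$ with $i(N) \to \infty$, and given $\epsilon > 0$ with its pair $(M_\epsilon, N_\epsilon)$, eventually $i(N) \geq M_\epsilon$ and $N \geq N_\epsilon$, so $|\Delta(i(N), N)| < \epsilon$; hence $\Delta(i(N),N) \to 0$.

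\textbf{Converse direction.} Conversely, I would show (C) and (D) together imply ($\star$). Suppose ($\star$) fails: there is $\epsilon_0 > 0$ such that for every pair $(M, N_0)$ there exist $N \geq N_0$ and $M \leq i \leq k(N)$ with $|\Delta(i,N)| \geq \epsilon_0$. Taking $M = N_0 = m$ for $m = 1, 2, \ldots$ produces sequences $N_m \to \infty$ and indices $i_m$ with $m \leq i_m \leq k(N_m)$ and $|\Delta(i_m, N_m)| \geq \epsilon_0$. Since $i_m \geq m \to \infty$ and $i_m \leq k(N_m)$, I would like to invoke (D) to get $\Delta(i_m, N_m) \to 0$, contradicting $|\Delta(i_m,N_m)| \geq \epsilon_0$. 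The one gap: (D) is phrased for functions $i(N)$ of $N$ with $i(N) \to \infty$, whereas here I have a sparse subsequence $(N_m, i_m)$. I would bridge this by extending $(N_m, i_m)$ to a full function: for $N$ between $N_m$ and $N_{m+1}$ set $i(N)$ to interpolate (e.g.\ $i(N) = i_m$ on $[N_m, N_{m+1})$, or any nondecreasing filling with $i(N) \to \infty$ and $i(N) \leq k(N)$, using that $k$ is nondecreasing-ish and tends to infinity---if $k$ is not monotone one pads carefully so that $i(N) \leq k(N)$ still holds, possibly by thinning the subsequence first so that $k(N_m)$ is large). Then (D) applies to this $i(N)$, giving $\Delta(i(N),N) \to 0$, in particular $\Delta(i_m, N_m) \to 0$ as $m \to \infty$, the desired contradiction. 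One should also use (C) here to rule out pathologies where the ``bad'' indices are all bounded: if the $i_m$ were to stay bounded we could pass to a subsequence with $i_m \equiv i^\star$ fixed, and then (C) gives $\Delta(i^\star, N_m) \to \Delta(i^\star)$, which is a finite number---but then for any prescribed $\epsilon$ we can absorb $i^\star$ into the finitely-many-exceptions below $M_\epsilon$, so such bounded bad indices never obstruct ($\star$); hence WLOG $i_m \to \infty$ and the (D) argument runs.

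\textbf{Main obstacle.} The technical heart is the converse: converting the ``sparse double-indexed'' failure of ($\star$) into a genuine $N$-indexed sequence $i(N)$ that (D) can consume, while respecting the constraint $i(N) \leq k(N)$ when $k$ need not be monotone. I would resolve this by first thinning to a subsequence along which $k(N_m)$ is nondecreasing and $k(N_m) \to \infty$ (possible since $k \to \infty$), then defining $i(N)$ piecewise-constant on the intervals $[N_m, N_{m+1})$ equal to $\min\{i_m, \inf_{N_m \le N' < N_{m+1}} k(N')\}$, which is $\to \infty$ because $i_m \to \infty$ and $k \to \infty$. Everything else---the forward direction and the bookkeeping with finitely many exceptional small indices---is routine once the quantifier structure is laid out carefully. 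I would write the proof as two short paragraphs, ``($\star$) $\Rightarrow$ (C) and (D)'' and ``(C) and (D) $\Rightarrow$ ($\star$)'', with the thinning construction stated explicitly in the second.
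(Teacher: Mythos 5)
Your proposal is correct and takes essentially the same route as the paper's proof: the forward direction is the same quantifier-chasing (the paper likewise simply takes the existence of the pointwise limits $\Delta(i)$ as given, which is exactly the subtlety you flag), and the converse is the same proof by contradiction, extracting from the failure of the uniform condition a sequence of bad pairs $(i_m,N_m)$ with $i_m\to\infty$ and $i_m\le k(N_m)$ that violates (D). The only cosmetic differences are that the paper forces $i_m\to\infty$ by choosing $M_\epsilon=k(N_\epsilon)/2$ rather than $M=m$, and it does not spell out the extension of the sparse subsequence to a genuine function $i(N)$, a step you treat more carefully.
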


\begin{proof}
$(\Rightarrow)$ The boundedness of $\Delta(\cdot)$ is an immediate consequence of the boundedness of $\Delta(\cdot,\cdot)$. By definition of $\Delta(\cdot)$, we can pick a $\hat{N}_{i,\epsilon} \geq N_{\epsilon/2}$  for every $\epsilon>0$ and for all $i \geq M_{\epsilon/2}$ such that $|\Delta(i,N)-\Delta(i)|<\epsilon/2$ for all $N \geq \hat{N}_{i,\epsilon}$. Then,
\begin{align*}
|\Delta(i)| \leq |\Delta(i,\hat{N}_{i,\epsilon})|+\epsilon/2 < \epsilon
\end{align*}
for all $i \geq M_{\epsilon/2}$, showing that $\lim_{i \rightarrow \infty} \Delta(i)=0$. 

For condition~(D) to hold, suppose $\epsilon > 0$ and let $\tilde{N}_\epsilon \in \mathbb{N}$ be large enough such that $i(N) \geq M_\epsilon$ for all $N \geq \tilde{N}_\epsilon$ and $\tilde{N}_\epsilon \geq N_\epsilon$. Then, by assumption we obtain $|\Delta(i(N),N)|<\epsilon$ for all $N \geq \tilde{N}_\epsilon$.\\

$(\Leftarrow)$ If not, then $\exists \epsilon >0$ such that for every $(M_\epsilon,N_\epsilon) \in \mathbb{N} \times \mathbb{N}$ there exists an $i>M_\epsilon$ and $N \geq N_\epsilon$ such that $|\Delta(i,N)| \geq \epsilon$. In particular, if we choose $M_\epsilon= k(N_\epsilon)/2$, then there exists an $\epsilon >0$ such that for every $N_\epsilon \in \mathbb{N}$ there are a $ k(N_\epsilon)/2 \leq i \leq k(N_\epsilon)$ and $N \geq N_\epsilon$ such that $|\Delta(i,N)| \geq \epsilon$, contradicting condition~(D).
\end{proof}

\begin{proof}[Proof of Proposition~\ref{prop:AssFailedLines}]
It is known that the distribution function of a standard uniformly distributed random variable is contained in the maximum domain of attraction of a Weibull distribution~\cite[p.154]{Embrechts1997}:
\begin{align*}
\Prob\left(N(U_{(N)}^N-1) \leq  x\right) &= \Prob\left(U_{(N)}^N \leq 1+\frac{x}{N}\right) \longrightarrow \left\{ \begin{array}{ll}
e^x & x \leq 0,\\
1 & x>0,
\end{array}\right.
\end{align*}
as $N \rightarrow \infty$. 

\noindent
Then, by Theorem 4.2.8 of~\cite[p.201]{Embrechts1997}, the first $k$ order statistics converge in distribution to a particular distribution. More specifically, for every fixed $k \in \mathbb{N}$,
\begin{align*}
\left(N(U_{(N-i+1)}^N-1)\right)_{i=1,...,k} \overset{d}{\longrightarrow} \left(Y^{(i)}\right)_{i=1,...,k}
\end{align*}
as $N \rightarrow \infty$, where the joint density of $\left(Y^{(1)},Y^{(2)},...,Y^{(k)}\right)$ is given by 
\begin{align*}
\begin{array}{ll}
\psi_1(x_1,...,x_k)=e^{x_k} & x_k<...<x_1<0.
\end{array}
\end{align*}
\noindent
This observation is essential to determine the asymptotic exceedance probability, which we derive next.

First suppose that $c_{i,N}$ does not depend on $N$, i.e.~$c_{i,N}=c_i$ for all $N \in \mathbb{N}$. Then, the proof follows by induction. For $k=1$ the statement holds, since
\begin{align*}
\lim_{N \rightarrow \infty} \Prob\left(U_{(1)}^N \leq \frac{c_{1}}{N} \right) = \lim_{N \rightarrow \infty} 1-\left(1-\frac{c_{1}}{N}\right)^N = 1- e^{-c_{1}}.
\end{align*}
Suppose the statement holds for all integers strictly smaller than $k$. Then,
\begin{align*}
\lim_{N \rightarrow \infty} \Prob\left(U_{(i)}^N \leq \frac{c_{i}}{N} , \;\;\; \forall i \leq k \right)
&= \lim_{N \rightarrow \infty} \Prob\left(U_{(N-i+1)}^N > 1-\frac{c_{i}}{N} , \;\;\; \forall i \leq k \right) =  \Prob\left(Y^{(i)} > -c_{i} , \;\;\; \forall i \leq k \right) \\
&\hspace{-0.5cm}= \int_{-c_{1}}^0 \int_{-c_{2}}^{y_1} \cdots \int_{-c_{k}}^{y_{k-1}} e^{y_k} \; dy_k \cdots dy_1  \\
&\hspace{-0.5cm}= \int_{-c_{1}}^0 \int_{-c_{2}}^{y_1} \cdots \int_{-c_{k-1}}^{y_{k-1}} e^{y_{k-1}} \; dy_{k-1} \cdots dy_1  - e^{-c_{k}} \int_{-c_{1}}^0 \int_{-c_{2}}^{y_1} \cdots \int_{-c_{k-1}}^{y_{k-2}} dy_{k-1} \cdots dy_1 \\
&\hspace{-0.5cm}= 1-\sum_{j=1}^{k-1} \beta_{j-1} e^{-j}-e^{c_{k}}\beta_{k-1} = 1-\sum_{j=1}^k \beta_{j-1} e^{-j}.
\end{align*}
By induction, the statement thus holds for all $k \geq 1$.

Next, suppose $c_{i,N}$ does depend on $N$, i.e.~there is at least one $N \in \mathbb{N}$ such that $c_{i,N} \neq c_i$. Then,
\begin{align*}
& \Prob(\A \geq k) = \lim_{N \rightarrow \infty} \Prob\left( \U \leq \frac{c_{i,N}}{N}, \;\;\; \forall i\leq k\right) \overset{N \rightarrow \infty}{\longrightarrow} \Prob\left( N(U_{(N-i+1)}^N-1) \geq -c_{i}(1+o(1)), \; \forall i \leq k\right).
\end{align*}
Note that for every $\epsilon> 0$ (small enough) there exists a $N_0 \in \mathbb{N}$ such that for all $N\geq N_0$ and $1 \leq i \leq k$, 
\begin{align*}
\Prob(\A \geq k) \geq \Prob&\left( N(U_{(N-i+1)}^N-1) \geq -c_{i} +\epsilon , \forall i \leq k\right) 
\end{align*}
and
\begin{align*}
\Prob(\A \geq k) \leq \Prob\left( N(U_{(N-i+1)}^N-1) \geq -c_{i} - \epsilon , \forall i \leq k\right).
\end{align*}

\noindent
Write $V_1, V_2$ for the integration area of the upper bound and the lower bound respectively, and $V$ for the integration area corresponding to $c_{1},...,c_{k}$. Since $e^{x}<1$ for all $x<0$, it follows that
\begin{align*}
\limsup_{N \rightarrow \infty}   \, \Prob(\A \geq k) &= \int_{V_1} e^y \,dy \leq \int_{V} e^y \,dy+\int_{V_1 \backslash V} 1 \,dy \leq 1-\sum_{j=1}^k \beta_{j-1} e^{-j}+ k (c_{k} +\epsilon )^{k-1} \epsilon.
\end{align*}

\noindent
Similarly, for the lower bound we have
\begin{align*}
\liminf_{N \rightarrow \infty} \; \Prob(\A \geq k) &\geq \int_{V} e^y \,dy - \int_{V \backslash V_2} 1 \,dy \geq 1-\sum_{j=1}^k \beta_{j-1} e^{-j} - k (c_k)^{k-1} \epsilon.
\end{align*}
Letting $\epsilon \downarrow 0$ we obtain that both the upper bound and the lower bound converge to $1-\sum_{j=1}^k \beta_{j-1} e^{-j}$.
\end{proof}

\end{document}